\documentclass[12pt]{amsart}

\usepackage{amsmath,amscd}
\usepackage{amssymb}
\usepackage{amsthm}

\usepackage{mathrsfs}
\usepackage{hyperref}

\usepackage{calc}
\newenvironment{tehtratk}%
             {\begin{list}{\arabic{enumi}.}{\usecounter{enumi}%
              \setlength{\labelsep}{0.5em}%
              \settowidth{\labelwidth}{(\arabic{enumi})}%
              \setlength{\leftmargin}{\labelwidth+\labelsep}}}%
             {\end{list}}

\newtheorem{Theorem}{Theorem}[section]

\newtheorem{Lemma}[Theorem]{Lemma}

\theoremstyle{definition}

\newtheorem{Remark}[Theorem]{Remark}

\numberwithin{equation}{section}

\newcommand{\mR}{\mathbb{R}}                    
\newcommand{\abs}[1]{\lvert #1 \rvert}          
\newcommand{\norm}[1]{\lVert #1 \rVert}         
\newcommand{\br}[1]{\langle #1 \rangle}         

\newcommand{\ol}[1]{\overline{#1}}

\newcommand{\mS}{\mathscr{S}}

\newcommand{\mF}{\mathscr{F}}

\newcommand{\eps}{\varepsilon}

\newcounter{sidenote}
\setlength{\marginparwidth}{.8in}

\begin{document}

\title[Calder{\'o}n problem for fractional Schr\"odinger equation]{The Calder{\'o}n problem for the fractional Schr\"odinger equation} 

\author[T. Ghosh]{Tuhin Ghosh}
\address{Jockey Club Institute for Advanced Study, HKUST, Hong Kong}
\email{iasghosh@ust.hk}

\author[M. Salo]{Mikko Salo}
\address{Department of Mathematics and Statistics, University of Jyv\"askyl\"a}
\email{mikko.j.salo@jyu.fi}

\author[G. Uhlmann]{Gunther Uhlmann}
\address{Department of Mathematics, University of Washington / Jockey Club Institute for Advanced Study, HKUST, Hong Kong / Department of Mathematics and Statistics, University of Helsinki}
\email{gunther@math.washington.edu}




\begin{abstract}
We show global uniqueness in an inverse problem for the fractional Schr\"odinger equation: an unknown potential in a bounded domain is uniquely determined by exterior measurements of solutions. We also show global uniqueness in the partial data problem where measurements are taken in arbitrary open, possibly disjoint, subsets of the exterior. The results apply in any dimension $\geq 2$ and are based on a strong approximation property of the fractional equation that extends earlier work. This special feature of the nonlocal equation renders the analysis of related inverse problems radically different from the traditional Calder\'on problem.
\end{abstract}

\maketitle

\section{Introduction} \label{sec_introduction}

In this article we consider a nonlocal analogue of the inverse conductivity problem posed by Calder\'on \cite{Calderon}. In the standard Calder\'on problem, the objective is to determine the electrical conductivity of a medium from voltage and current measurements on its boundary. This problem is the mathematical model of Electrical Resistivity/Impedance Tomography in seismic, medical and industrial imaging. It serves as a model case for various inverse problems for elliptic equations, and has a rich mathematical theory with connections to many other questions. We refer to the survey \cite{Uhlmann_survey} for more details.

In mathematical terms, if $\Omega \subset \mR^n$ is a bounded open set with Lipschitz boundary (the medium of interest), after a standard reduction one often considers the Dirichlet problem for the Schr\"odinger equation 
\[
(-\Delta + q)u = 0 \text{ in $\Omega$}, \qquad u|_{\partial \Omega} = f
\]
where $q \in L^{\infty}(\Omega)$ and $0$ is not a Dirichlet eigenvalue for $-\Delta + q$ in $\Omega$. The boundary measurements are given by the Dirichlet-to-Neumann map (DN map) 
\[
\Lambda_q: H^{1/2}(\partial \Omega) \to H^{-1/2}(\partial \Omega),
\]
defined weakly in terms of the bilinear form for the equation. Here and below, we denote the standard $L^2$ based Sobolev spaces by $H^s$.

For more regular boundaries and functions $f$, the DN map is given by the normal derivative $\Lambda_q f = \partial_{\nu} u|_{\partial \Omega}$ where $u$ is the solution with boundary value $f$. The inverse problem is to determine the potential $q$ in $\Omega$ from the knowledge of the DN map $\Lambda_q$.

We will consider an inverse problem for a nonlocal analogue of the Schr\"odinger equation. In fact, our equation will be the fractional Schr\"odinger equation $((-\Delta)^s + q) u = 0$ in $\Omega$ where $0 < s < 1$. Here the fractional Laplacian is defined by 
\[
(-\Delta)^s u = \mF^{-1} \{ \abs{\xi}^{2s} \hat{u}(\xi) \}, \qquad u \in H^s(\mR^n),
\]
and $\hat{u} = \mF u$ is the Fourier transform of $u$. This operator is nonlocal (it does not preserve the support of $u$), and one natural way to set up the Dirichlet problem is to look for solutions $u \in H^s(\mR^n)$ satisfying 
\[
((-\Delta)^s + q)u = 0 \text{ in $\Omega$}, \qquad u|_{\Omega_e} = f
\]
where $f \in H^s(\Omega_e)$, and $\Omega_e$ is the exterior domain 
\[
\Omega_e = \mR^n \setminus \ol{\Omega}.
\]
We recall basic facts about weak solutions in Section \ref{sec_preliminaries}. In particular, there is a countable set of Dirichlet eigenvalues, and we will assume that $q$ is such that $0$ is not an eigenvalue, that is: 
\begin{equation} \label{dirichlet_uniqueness}
\left\{Ê\begin{array}{c} \text{if $u \in H^s(\mR^n)$ solves $((-\Delta)^s + q)u = 0$ in $\Omega$ and $u|_{\Omega_e} = 0$,} \\
\text{then $u \equiv 0$.} \end{array} \right.
\end{equation}
This holds e.g.\ if $q \geq 0$. Then there is a unique solution $u \in H^s(\mR^n)$ for any $f \in H^s(\Omega_e)$, and one may define an analogue of the DN map, 
\[
\Lambda_q: H^s(\Omega_e) \to H^s(\Omega_e)^*
\]
that maps $f$ to a nonlocal analogue of the Neumann boundary value of the solution $u$. (This discussion assumed that $\Omega$ is a bounded Lipschitz domain, see Section \ref{sec_preliminaries} for the case of general bounded open sets.)

We will define $\Lambda_q$ via the bilinear form associated with the fractional Dirichlet problem. There are other nonlocal Neumann operators that one could use, but by Theorem \ref{thm_main} any reasonable measurement operator would be determined by $\Lambda_q$ (we will verify this directly for the operator $\mathcal{N}_s$ in \cite{DipierroRosOtonValdinoci}). Again, if $\Omega$ has $C^{\infty}$ boundary and $q$ and $f$ are more regular, the DN map is more explicit and is given by 
\[
\Lambda_q: H^{s+\beta}(\Omega_e) \to H^{-s+\beta}(\Omega_e), \ \ \Lambda_q f = (-\Delta)^s u|_{\Omega_e}
\]
where $u$ is the solution of $((-\Delta)^s + q)u = 0$ in $\Omega$ with exterior value $f$, and $\max \{0, s-1/2\} < \beta < 1/2$ (such a $\beta$ exists since $0 < s < 1$). Heuristically, given an open set $W \subset \Omega_e$, one can interpret $\Lambda_q f|_W$ as measuring the cost required to maintain the exterior value $f$ in $W$.

The following theorem is the main result in this article. It solves the fractional Schr\"odinger inverse problem in any dimension $n \geq 2$, and also the partial data problem with exterior Dirichlet and Neumann measurements in arbitrary open (possibly disjoint) sets $W_1, W_2 \subset \Omega_e$.

\begin{Theorem} \label{thm_main}
Let $\Omega \subset \mR^n$, $n \geq 2$, be bounded open, let $0 < s < 1$, and let $q_1, q_2 \in L^{\infty}(\Omega)$ satisfy \eqref{dirichlet_uniqueness}. Let also $W_1, W_2 \subset \Omega_e$ be open. If the DN maps for the equations $((-\Delta)^s + q_j) u = 0$ in $\Omega$ satisfy 
\[
\Lambda_{q_1} f|_{W_2} = \Lambda_{q_2} f|_{W_2} \text{ for any $f \in C^{\infty}_c(W_1)$,}
\]
then $q_1 = q_2$ in $\Omega$.
\end{Theorem}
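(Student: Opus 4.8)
The plan is to use the standard Calderón-type strategy of reducing the problem to a density statement, but with the crucial new ingredient being a strong approximation (Runge) property for the fractional Laplacian. First I would establish the integral identity: given solutions $u_j \in H^s(\mR^n)$ of $((-\Delta)^s + q_j)u_j = 0$ in $\Omega$ with exterior data supported in $W_1$, the hypothesis $\Lambda_{q_1} f|_{W_2} = \Lambda_{q_2} f|_{W_2}$ together with the symmetry of the bilinear form yields
\[
\int_{\Omega} (q_1 - q_2)\, u_1 u_2 \, dx = 0
\]
for all such pairs $(u_1, u_2)$ where $u_1$ has exterior data in $W_1$ and $u_2$ is any solution of the $q_2$-equation (the Neumann data of $u_2$ being tested against the exterior Dirichlet data of $u_1$, which live in $W_1$, so the measurement in $W_2$ is what controls one side). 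This is the exact analogue of Alessandrini's identity and should follow directly from the weak formulation recalled in Section \ref{sec_preliminaries}.

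The heart of the argument is then the approximation property: I would show that the set
\[
\{ u|_{\Omega} : u \in H^s(\mR^n),\ ((-\Delta)^s + q)u = 0 \text{ in } \Omega,\ \mathrm{supp}(u) \subset \ol{\Omega} \cup \ol{W} \}
\]
is dense in $L^2(\Omega)$, for $q = q_1$ with $W = W_1$ and for $q = q_2$ with $W = W_2$. This is where the nonlocality is essential: unlike the local Calderón problem, one can approximate \emph{arbitrary} $L^2$ functions on $\Omega$, not just solutions satisfying some structural constraint. The proof of this density should go by a Hahn--Banach / duality argument: if $f \in L^2(\Omega)$ is orthogonal to all such $u|_{\Omega}$, one solves a dual (adjoint) problem $((-\Delta)^s + q)\varphi = f$ in $\Omega$ with $\varphi|_{\Omega_e} = 0$, and then the pairing $\langle u, f \rangle = \langle u, ((-\Delta)^s+q)\varphi\rangle$ is manipulated, using that $u$ solves the equation in $\Omega$, into a boundary/exterior term that forces $(-\Delta)^s \varphi = 0$ in $W$. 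Combined with $\varphi = 0$ in $\Omega_e \supset W$, this gives $\varphi = (-\Delta)^s\varphi = 0$ in the open set $W$, and then the strong uniqueness continuation principle for the fractional Laplacian (which says a function vanishing together with its fractional Laplacian on an open set must vanish identically) forces $\varphi \equiv 0$, hence $f = 0$.

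Granting the two density statements, the conclusion is immediate: choosing $u_1^{(k)} \to g_1$ and $u_2^{(k)} \to g_2$ in $L^2(\Omega)$ for arbitrary $g_1, g_2 \in L^2(\Omega)$ (with $u_1^{(k)}$ having data in $W_1$ and $u_2^{(k)}$ having data in $W_2$, the latter being permissible because $u_2$ only needs to be \emph{some} solution — here one swaps the roles so that $W_2$ is available), the integral identity passes to the limit and gives $\int_\Omega (q_1 - q_2) g_1 g_2 \, dx = 0$ for all $g_1, g_2$, whence $q_1 = q_2$. The main obstacle I anticipate is the rigorous justification of the strong uniqueness continuation property for $(-\Delta)^s$ — namely that $\varphi|_W = (-\Delta)^s\varphi|_W = 0$ on an open set $W$ implies $\varphi \equiv 0$ — which is a delicate nonlocal fact (provable via the Caffarelli--Silvestre extension and Carleman estimates, or via known results of Rüland), and whose validity in all dimensions $n \geq 2$ for $0 < s < 1$ is what makes the whole approach work; a secondary technical point is handling general bounded open $\Omega$ (rather than Lipschitz domains) in the well-posedness and the bilinear form, which must be set up carefully as indicated in Section \ref{sec_preliminaries}.
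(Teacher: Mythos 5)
Your proposal follows essentially the same route as the paper: the Alessandrini-type integral identity from the weak formulation, the Runge density result proved by Hahn--Banach duality against the adjoint Dirichlet problem, and the fractional unique continuation principle (established via Caffarelli--Silvestre extension and R\"uland's Carleman estimates) to close the duality argument. The only cosmetic difference is that the paper approximates $r_\Omega u_1 \to f$ and $r_\Omega u_2 \to 1$ rather than two arbitrary $L^2$ functions, but this is an inessential variant of the same conclusion.
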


For the usual Schr\"odinger equation $(-\Delta+q)u = 0$ and the related DN map $\Lambda_q$ on the full boundary $\partial \Omega$, the corresponding result is due to \cite{SylvesterUhlmann} when $n \geq 3$ and to \cite{Bukhgeim} when $n=2$ for slightly more regular potentials; for the case of $L^p$ potentials see \cite{BIY} when $n=2$ and \cite{Chanillo, LavineNachman} when $n \geq 3$. The partial data problem of determining $q$ from the knowledge of $\Lambda_q f|_{\Gamma}$ for any $f$ supported in $\Gamma$, when $\Gamma$ is an arbitrary open subset of $\partial \Omega$, was solved in \cite{IUY} when $n=2$ for $q_j \in C^{2,\alpha}$. The corresponding result in dimensions $n \geq 3$ is open, but there are several partial results including \cite{KSU}, \cite{I}, \cite{KenigSalo}. The case of measurements on disjoint sets is even more difficult, and counterexamples may appear \cite{IUY_disjoint, DKN1, DKN2}. See the surveys \cite{IY_survey, KenigSalo_survey} for further references.

The proof of Theorem \ref{thm_main} begins by showing that if the two DN maps are equal, then (exactly as in the usual Schr\"odinger case) one has the integral identity 
\[
\int_{\Omega} (q_1-q_2) u_1 u_2 \,dx = 0
\]
for any $u_j \in H^s(\mR^n)$ that solve $((-\Delta)^s + q_j) u_j = 0$ in $\Omega$ and satisfy $\mathrm{supp}(u_j) \subset \overline{\Omega} \cup \overline{W}_j$. For the standard Schr\"odinger equation, one then typically uses special complex geometrical optics solutions $u_j$ to show that the products $\{Êu_1 u_2 \}$ form a complete set in $L^1(\Omega)$. See \cite{Uhlmann_survey} for an overview.

However, solutions of the fractional Schr\"odinger equation are much less rigid than those of the usual Schr\"odinger equation. The fractional equation enjoys stronger uniqueness and approximation properties, as demonstrated by the following theorems:

\begin{Theorem} \label{thm_uniqueness}
If $0 < s < 1$, if $u \in H^{-r}(\mR^n)$ for some $r \in \mR$, and if both $u$ and $(-\Delta)^s u$ vanish in some open set, then $u \equiv 0$.
\end{Theorem}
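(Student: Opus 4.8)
The plan is to reduce this to a strong unique continuation property (UCP) for the fractional Laplacian, which in turn follows from the Caffarelli–Silvestre extension characterization of $(-\Delta)^s$ together with the classical unique continuation theorem for degenerate elliptic equations with $A_2$ Muckenhoupt weights. First I would assume without loss of generality that $u$ and $(-\Delta)^s u$ both vanish in a fixed open set $V \subset \mR^n$. The key technical tool is the extension problem: for $u \in H^{-r}(\mR^n)$ (or more comfortably, after a reduction, $u \in H^s(\mR^n)$), there is a function $w = w(x,y)$ on $\mR^{n+1}_+ = \mR^n \times (0,\infty)$ solving
\[
\mathrm{div}_{x,y}(y^{1-2s} \nabla_{x,y} w) = 0 \text{ in $\mR^{n+1}_+$}, \qquad w(x,0) = u(x),
\]
with the Neumann-type boundary identity $-c_s \lim_{y \to 0^+} y^{1-2s} \partial_y w(x,y) = (-\Delta)^s u(x)$ for an appropriate constant $c_s > 0$. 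The weight $y^{1-2s}$ lies in the Muckenhoupt class $A_2(\mR^{n+1})$ since $-1 < 1-2s < 1$, so the theory of degenerate elliptic equations (Fabes–Kenig–Serapioni) applies.

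The crux of the argument is then: since $u$ vanishes in $V$ and $(-\Delta)^s u$ vanishes in $V$, the function $w$ has vanishing Dirichlet and (weighted) Neumann data on the flat portion $V \times \{0\}$ of the boundary. Extending $w$ by $0$ across $\{y = 0\}$ to $V \times \mR$ (using evenness, or more precisely extending $w(x,-y) := w(x,y)$), one checks that the extended function is a weak solution of the same degenerate equation in the full neighborhood $V \times \mR$ of $V \times \{0\}$; the vanishing of the weighted normal derivative is exactly the compatibility condition that makes the reflected/extended function a solution across the interface. Hence the extended $w$ solves a degenerate elliptic equation in an open connected subset of $\mR^{n+1}$ and vanishes on the open subset $V \times (-\delta,\delta)$ for small $\delta > 0$. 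Applying the unique continuation principle for such equations (Garofalo–Lin type results, valid for $A_2$ weights of this form), $w$ vanishes identically on the connected component, so in particular $w \equiv 0$ on $\mR^{n+1}_+$, whence $u = w(\cdot,0) \equiv 0$.

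I would handle the low-regularity hypothesis $u \in H^{-r}(\mR^n)$ by a preliminary reduction: since $(-\Delta)^s u$ vanishes in the open set $V$, locally on $V$ we gain regularity, but more to the point, one can mollify or apply $(1-\Delta)^{-N}$ type smoothing — actually the cleanest route is to note that the extension problem and the UCP argument can be run at negative Sobolev regularity as well (the $A_2$-weighted Sobolev space framework accommodates this), or alternatively to first prove the statement for $u \in H^s$ and then bootstrap: if $u \in H^{-r}$ with $(-\Delta)^s u = 0$ in $V$, pick a slightly smaller open set and use that $u$ restricted there is smooth, then a cutoff-and-duality argument transfers the problem into the $H^s$ setting. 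The main obstacle I anticipate is making the reflection/extension across $\{y=0\}$ rigorous in the weighted setting — verifying that the function obtained by even extension is genuinely a weak solution of $\mathrm{div}(|y|^{1-2s}\nabla w) = 0$ across the hyperplane requires that the weighted conormal derivative vanish in the appropriate weak (trace) sense, and matching the Caffarelli–Silvestre normalization of $(-\Delta)^s u$ with this conormal derivative must be done carefully, especially at low regularity where traces need the right weighted trace theorems.
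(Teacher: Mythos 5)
Your overall strategy coincides with the paper's: reduce to regular $u$, pass to the Caffarelli--Silvestre extension $w$ on $\mR^{n+1}_+$, and invoke a unique continuation principle for the degenerate elliptic equation $\mathrm{div}(y^{1-2s}\nabla w) = 0$. The paper applies the Carleman estimate of R\"uland directly to the half-ball $B^+$ with vanishing Cauchy data on the flat boundary (Proposition 2.2 in \cite{Ruland}), then uses interior real-analyticity to propagate $w\equiv 0$ from $B^+$ to all of $\mR^{n+1}_+$; you instead propose a reflection across $\{y=0\}$ followed by an interior unique continuation theorem (Garofalo--Lin / Fabes--Kenig--Serapioni for $A_2$ weights). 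That alternative route is legitimate, but your description of the reflection step contains a genuine error. You write that you extend $w$ by \emph{even} reflection $w(x,-y):=w(x,y)$, and then claim the extended function ``vanishes on the open subset $V\times(-\delta,\delta)$.'' Even reflection does \emph{not} produce vanishing on an open set: the extended function vanishes only on the hypersurface $V\times\{0\}$, where the Dirichlet and conormal Neumann data are zero. Vanishing of a solution on a hypersurface together with vanishing conormal derivative is zero Cauchy data, not vanishing on an open set, and so the interior weak UCP you cite does not directly apply to the evenly reflected function.

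The way to repair this is to use extension by \emph{zero} rather than even reflection: set $\tilde w(x,y)=w(x,y)$ for $y>0$ and $\tilde w(x,y)=0$ for $y\le 0$. Then $\tilde w$ vanishes on the open set $V\times(-\delta,0)$, and the vanishing of both the trace $w|_{V\times\{0\}}$ and the conormal derivative $\lim_{y\to 0^+}y^{1-2s}\partial_y w$ on $V$ is precisely what is needed so that the weak formulation of $\mathrm{div}(|y|^{1-2s}\nabla\tilde w)=0$ holds across $\{y=0\}$ in $V\times(-\delta,\delta)$ (the boundary term in the integration by parts is the conormal derivative, and it vanishes). After this correction the argument goes through via a weak UCP for $A_2$-degenerate elliptic equations, and then real-analyticity of $w$ in the interior $\mR^{n+1}_+$ finishes as in the paper. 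You should also commit to a concrete reduction for the $H^{-r}$ hypothesis; the paper's mollification $u_\eps = u*\eps^{-n}\varphi(\cdot/\eps)$, which commutes with $(-\Delta)^s$ and preserves vanishing on a slightly smaller open set while giving arbitrary Sobolev regularity, is cleaner than the several tentative options you list.
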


\begin{Theorem} \label{thm_approximation}
Let $\Omega \subset \mR^n$ be a bounded open set, and let $\Omega_1 \subset \mR^n$ be any open set with $\Omega \subset \Omega_1$ and $\mathrm{int}(\Omega_1 \setminus \Omega) \neq \emptyset$.
\begin{itemize}
\item[(a)] 
If $q \in L^{\infty}(\Omega)$ satisfies \eqref{dirichlet_uniqueness}, then any $f \in L^2(\Omega)$ can be approximated arbitrarily well in $L^2(\Omega)$ by functions $u|_{\Omega}$ where $u \in H^s(\mR^n)$ satisfy 
\[
((-\Delta)^s + q) u = 0 \text{ in $\Omega$}, \qquad \mathrm{supp}(u) \subset \ol{\Omega}_1.
\]
\item[(b)] 
If $\Omega$ has $C^{\infty}$ boundary, and if $q \in C^{\infty}_c(\Omega)$ satisfies \eqref{dirichlet_uniqueness}, then any $f \in C^{\infty}(\ol{\Omega})$ can be approximated arbitrarily well in $C^{\infty}(\ol{\Omega})$ by functions $d(x)^{-s} u|_{\Omega}$ where $u \in H^s(\mR^n)$ satisfy 
\[
((-\Delta)^s + q) u = 0 \text{ in $\Omega$}, \qquad \mathrm{supp}(u) \subset \ol{\Omega}_1.
\]
(Here $d$ is any function in $C^{\infty}(\ol{\Omega})$ with $d(x) = \mathrm{dist}(x,\partial \Omega)$ near $\Omega$ and $d > 0$ in $\Omega$. Also, $v_j \to v$ in $C^{\infty}(\ol{\Omega})$ means that $v_j \to v$ in $C^k(\ol{\Omega})$ for all $k \geq 0$.)
\end{itemize}
\end{Theorem}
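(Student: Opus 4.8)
The plan is to prove both parts by the same Runge-type scheme: use Hahn--Banach to reduce each density statement to the triviality of the annihilator of the relevant set of solutions, and then invoke the strong uniqueness property of Theorem~\ref{thm_uniqueness} to show that this annihilator is trivial. Put $\mathcal{O} = \mathrm{int}(\Omega_1\setminus\Omega)$, which is nonempty by hypothesis and satisfies $\mathcal{O}\subset\Omega_e$, so it furnishes a nonempty open set on which Theorem~\ref{thm_uniqueness} applies. It suffices to approximate using solutions coming from exterior data $f\in C^\infty_c(\mathcal{O})$: for such $f$ the Dirichlet problem $((-\Delta)^s+q)u_f=0$ in $\Omega$, $u_f|_{\Omega_e}=f$, has a unique solution $u_f$ by \eqref{dirichlet_uniqueness}, and writing $u_f=f+r_f$ with $r_f$ in $\widetilde{H}^s(\Omega):=\{w\in H^s(\mathbb{R}^n):\mathrm{supp}(w)\subset\overline{\Omega}\}$ we have $u_f|_\Omega=r_f|_\Omega$ (as $f$ vanishes on $\Omega$) and $\mathrm{supp}(u_f)\subset\overline{\Omega}\cup\mathrm{supp}(f)\subset\overline{\Omega}_1$, so these $u_f$ are admissible.

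For part (a), suppose $v\in L^2(\Omega)$ annihilates $\{u_f|_\Omega:f\in C^\infty_c(\mathcal{O})\}$. Let $\phi\in\widetilde{H}^s(\Omega)$ solve the adjoint Dirichlet problem $((-\Delta)^s+q)\phi=v$ in $\Omega$, $\phi|_{\Omega_e}=0$, i.e.\ $B_q(\phi,w)=\int_\Omega vw\,dx$ for all $w\in\widetilde{H}^s(\Omega)$, where $B_q$ is the (symmetric, $q$ being real) bilinear form of the equation from Section~\ref{sec_preliminaries}; this is solvable by \eqref{dirichlet_uniqueness} and Fredholm theory. Testing the equation for $u_f$ against $\phi$ gives $B_q(u_f,\phi)=0$, and testing the equation for $\phi$ against $r_f$, then using $u_f|_\Omega=r_f|_\Omega$, bilinearity, symmetry of $B_q$ and $f|_\Omega=0$, one obtains
\[
0=\int_\Omega v\,u_f\,dx=\int_\Omega v\,r_f\,dx=B_q(\phi,r_f)=B_q(\phi,u_f)-B_q(\phi,f)=-B_q(\phi,f)=-\langle(-\Delta)^s\phi,f\rangle_{\mathbb{R}^n}.
\]
Since this holds for all $f\in C^\infty_c(\mathcal{O})$, we get $(-\Delta)^s\phi=0$ in $\mathcal{O}$; as also $\phi=0$ in $\Omega_e\supset\mathcal{O}$, Theorem~\ref{thm_uniqueness} forces $\phi\equiv0$, hence $v=((-\Delta)^s+q)\phi=0$ in $\Omega$. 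Thus the annihilator is trivial, proving (a).

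For part (b) I would run the same scheme in the Fr\'echet space $C^\infty(\overline{\Omega})$, whose dual is the space of distributions on $\mathbb{R}^n$ supported in $\overline{\Omega}$ (a smooth function vanishing on $\overline{\Omega}$ vanishes to infinite order on $\partial\Omega$, which makes this description correct and the pairing with a fixed extension of $d^{-s}u|_\Omega$ unambiguous). Two further ingredients enter. First, the boundary regularity theory for the fractional Dirichlet problem (Ros-Oton and Serra; Grubb): since $q\in C^\infty_c(\Omega)$ vanishes near $\partial\Omega$ and $f\in C^\infty_c(\mathcal{O})$ is supported away from $\overline{\Omega}$, the remainder $r_f$ lies in the $s$-transmission space, so $d^{-s}u_f|_\Omega=d^{-s}r_f|_\Omega\in C^\infty(\overline{\Omega})$ and $f\mapsto d^{-s}u_f|_\Omega$ is continuous into $C^\infty(\overline{\Omega})$ --- this makes the statement and the duality meaningful. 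Second, one needs a Green-type integration-by-parts identity adapted to the weight $d^{-s}$: for a distribution $T$ supported in $\overline{\Omega}$ it should express $\langle T,d^{-s}u_f|_\Omega\rangle$ through $f$ and a suitably defined adjoint solution $\phi$ that vanishes in $\Omega_e$, schematically $\langle T,d^{-s}u_f|_\Omega\rangle=-\langle(-\Delta)^s\phi,f\rangle$ (this is immediate from the part~(a) computation when $T$ is supported inside $\Omega$, where one may take $\phi$ to solve $((-\Delta)^s+q)\phi=d^{-s}T$). Granting the identity, the annihilation hypothesis again gives $(-\Delta)^s\phi=0$ and $\phi=0$ in $\mathcal{O}$, so $\phi\equiv0$ by Theorem~\ref{thm_uniqueness}, and unwinding the definition of $\phi$ forces $T=0$.

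The main obstacle is precisely this second ingredient of part (b): making sense of the $d^{-s}$-weighted pairing and of the adjoint Dirichlet problem when $T$ is supported all the way up to $\partial\Omega$ --- where a naive source $d^{-s}T$ is ill-defined --- and proving the corresponding Green's formula, which requires the fine mapping properties of $(-\Delta)^s$ between Grubb's $\mu$-transmission spaces. Once that formula is available, both parts close in the same way through Theorem~\ref{thm_uniqueness}, so the whole argument rests on unique continuation together with the elliptic boundary regularity of the fractional equation.
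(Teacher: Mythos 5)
Your part (a) is essentially identical to the paper's Lemma \ref{lemma_runge_fractional}: Hahn--Banach, solve the homogeneous-exterior adjoint problem, use the symmetry of $B_q$ to transfer the annihilation condition onto $\varphi$, conclude $\varphi|_W=(-\Delta)^s\varphi|_W=0$, and apply Theorem~\ref{thm_uniqueness}. This is correct.

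For part (b) you have correctly identified the scheme (dualize, use the same Runge argument, close with Theorem~\ref{thm_uniqueness}), and you have correctly identified the obstacle, but you have not overcome it. The gap is precisely the ``Green-type identity adapted to the weight $d^{-s}$'': for a general $T\in\mathcal{E}'_{\overline\Omega}$ the expression $d^{-s}T$ has no meaning, so the proposed adjoint Dirichlet problem $((-\Delta)^s+q)\phi=d^{-s}T$ cannot be posed, and the schematic identity $\langle T,d^{-s}u_f|_\Omega\rangle=-\langle(-\Delta)^s\phi,f\rangle$ is never established for $T$ supported up to $\partial\Omega$. As written, the argument only closes for $T$ supported compactly inside $\Omega$, which would give density in $C^\infty$ of compact subsets but not in $C^\infty(\overline\Omega)$.

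The paper avoids having to invent a weighted Green's formula by changing the Fr\'echet space in which it dualizes. Rather than $C^\infty(\overline\Omega)$ and $\mathcal{E}'_{\overline\Omega}$, it proves density of $\mathcal R=\{e^+r_\Omega P_qf:f\in C^\infty_c(W)\}$ in Grubb's $\mu$-transmission space $\mathcal{E}_a(\overline\Omega)=e^+d^aC^\infty(\overline\Omega)$, topologized by the norms of the spaces $H^{a(s)}(\overline\Omega)$ from \cite{Grubb}. A continuous functional $L$ annihilating $\mathcal R$ is then bounded on some fixed $H^{a(s)}(\overline\Omega)$, and the decisive regularity input is Lemma~\ref{lemma_has_spaces}(c): $T=r_\Omega((-\Delta)^a+q)$ is a \emph{homeomorphism} $H^{a(s)}(\overline\Omega)\to H^{s-2a}(\Omega)$. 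One then represents $\bar L=T^*v$ with $v\in(H^{s-2a}(\Omega))^*=H^{-s+2a}_{\overline\Omega}$, and the ``Green's identity'' you were missing is nothing but the definition of the Banach space adjoint together with $TP_qf=0$: $0=\bar L(P_qf-f)=(v,T(P_qf-f))=-(v,Tf)=-((-\Delta)^av,f)$. This gives $v|_W=(-\Delta)^av|_W=0$ directly, since $v$ is \emph{a priori} supported in $\overline\Omega$ --- no weighted source, no explicit adjoint PDE. The final step back to $C^\infty(\overline\Omega)$ is a soft closed-graph argument showing that $M:f\mapsto e^+d^af$ is a Fr\'echet homeomorphism $C^\infty(\overline\Omega)\to\mathcal{E}_a(\overline\Omega)$. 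To repair your proof along your own lines you would have to reproduce the content of Grubb's $H^{a(s)}$ isomorphism in the form of a weighted Green's formula; the paper's route is the cleaner way to extract exactly what is needed from that regularity theory.
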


Note that both of these properties fail for the usual Laplacian: if $u \in C^{\infty}_c(\mR^n)$ then both $u$ and $\Delta u$ vanish in a large set but $u$ can be nontrivial, and the set of harmonic functions in $L^2(\Omega)$ is a closed subspace of $L^2(\Omega)$ which is smaller than $L^2(\Omega)$.

Theorem \ref{thm_uniqueness} is classical \cite{Riesz} at least with stronger conditions on $u$, and even the strong unique continuation principle holds in this context \cite{FallFelli, Ruland, Yu}. For later applications we will give a robust proof using the Carleman estimates from \cite{Ruland} and the Caffarelli-Silvestre extension \cite{CaffarelliSilvestre}.

The following version of Theorem \ref{thm_approximation} has been proved in \cite{DipierroSavinValdinoci}, see also \cite{ DipierroSavinValdinoci_nonlocal}: given $f \in C^k(\ol{B}_1)$ and $\eps > 0$, there is $u \in H^s(\mR^n)$ with $(-\Delta)^s u = 0$ in $B_1$ and $\mathrm{supp}(u) \subset \ol{B}_{R}$ for some possibly large $R = R_{\eps,f} > 1$, so that 
\[
\norm{u-f}_{C^k(\ol{B}_1)} < \eps.
\]
Theorem \ref{thm_approximation} improves this by reducing the approximation property to the uniqueness property, Theorem \ref{thm_uniqueness}, using a Runge type argument \cite{Lax, Malgrange} and regularity for fractional Dirichlet problems \cite{Hormander_unpublished, Grubb}. In particular, this implies that the result of \cite{DipierroSavinValdinoci} is valid for any fixed $R > 1$. The strong approximation property replaces the method of complex geometrical optics in solving the inverse problem for the fractional Schr\"odinger equation.

The study of fractional and nonlocal operators is currently an active research field and the related literature is substantial. We only mention that operators of this type arise in problems involving anomalous diffusion and random processes with jumps, and they have applications in probability theory, physics, finance, and biology. See \cite{BucurValdinoci, RosOton} for further information and references.

The mathematical study of inverse problems for fractional equations goes back at least to \cite{CNYY}. By now there are a number of results, mostly for time-fractional models and including many numerical works. Here is an example of the rigorous results that are available \cite{SakamotoYamamoto}: in the time-fractional heat equation 
\[
\partial_t^{\alpha} u - \Delta u = 0 \text{ in $\Omega \times (0,T)$,} \qquad u|_{\partial \Omega \times (0,T)} = 0,
\]
where $0 < \alpha < 1$ and $\partial_t^{\alpha}$ is the Caputo derivative, $u(0)$ is determined by $u(T)$ in a mildly ill-posed way (for $\alpha=1$ this problem is severely ill-posed). In general, nonlocality may influence the nature of the inverse problem but there are several aspects to be taken into account. We refer to \cite{JinRundell} for a detailed discussion and many further references. We are not aware of any  previous rigorous works on multidimensional inverse problems for space-fractional equations.

Finally, we note that Theorem \ref{thm_main} is a global uniqueness result in the inverse problem for the fractional Schr\"odinger equation, both with full and partial data. This could be used as as starting point for the study of reconstruction algorithms, stability properties and numerical implementations for inverse problems for the fractional Schr\"odinger equation and other nonlocal models as well.

This paper is organized as follows. Section \ref{sec_introduction} is the introduction. In Section \ref{sec_preliminaries} we review weak solutions of fractional Dirichlet problems, and give a definition of the DN map. For the benefit of those readers who may not be familiar with fractional Laplacians, we give rather complete (though concise) proofs using the Fourier transform and distribution theory as presented in \cite{Hormander1}. Section \ref{sec_dnmap_description} gives alternative descriptions of the DN map. In Sections \ref{sec_uniqueness} and \ref{sec_approximation_ltwo} we prove Theorems \ref{thm_uniqueness} and \ref{thm_approximation}(a). The solution of the inverse problem, Theorem \ref{thm_main}, is given in Section \ref{sec_inverse_problem}. In Section \ref{sec_approximation_sobolev} we invoke the regularity theory for fractional Dirichlet problems in \cite{Grubb} and prove Theorem \ref{thm_approximation}(b).

\subsection*{Acknowledgements}

M.S.\ was partly supported by the Academy of Finland (Centre of Excellence in Inverse Problems Research) and by an ERC Starting Grant (no 307023). G.U.\ was partly supported by NSF, a Si-Yuan Professorship at IAS,
HKUST, and a FiDiPro at U.\ Helsinki. The authors thank Gerd Grubb for several remarks that considerably improved Section \ref{sec_approximation_sobolev}, and Zhen-Qing Chen for helpful discussions.

\section{Fractional Laplacian} \label{sec_preliminaries}

In this section we review some basic facts about Dirichlet problems for the fractional Laplacian, see e.g.\ \cite{HohJacob, FelsingerKassmannVoigt, Grubb, RosOton}. For simplicity, we will assume most functions to be real valued in this paper.

\subsection{Sobolev spaces}


We first establish the notation for Sobolev type spaces. We write $H^s(\mR^n) = W^{s,2}(\mR^n)$ for the standard $L^2$ based Sobolev space with norm 
\[
\norm{u}_{H^s(\mR^n)} = \norm{\br{D}^s u}_{L^2(\mR^n)}
\]
where $\br{\xi} = (1+\abs{\xi}^2)^{1/2}$, and the notation $m(D) u = \mF^{-1} \{ m(\xi) \hat{u}(\xi) \}$ is used for Fourier multipliers when $m \in C^{\infty}(\mR^n)$ is polynomially bounded together with its derivatives. Our notation for the Fourier transform is 
\[
\hat{u}(\xi) = \mF u(\xi) = \int_{\mR^n} e^{-ix \cdot \xi} u(x) \,dx.
\]
If $U \subset \mR^n$ is an open set (not necessarily bounded), define the spaces (we follow the notation of \cite{McLean}) 
\begin{align*}
H^s(U) &= \{ u|_U \,;\, u \in H^s(\mR^n) \}, \\
\widetilde{H}^s(U) &= \text{closure of $C^{\infty}_c(U)$ in $H^s(\mR^n)$}, \\
H^s_0(U) &= \text{closure of $C^{\infty}_c(U)$ in $H^s(U)$}.
\end{align*}
We equip $H^s(U)$ with the quotient norm $\norm{u}_{H^s(U)} = \inf \{ \norm{w}_{H^s} \,;\, w \in H^s(\mR^n), \ w|_U = u \}$. Also, if $F \subset \mR^n$ is a closed set, we define 
\[
H^s_F = H^s_F(\mR^n) = \{ u \in H^s(\mR^n) \,;\, \mathrm{supp}(u) \subset F \}.
\]

We say that an open set $U \subset \mR^n$ is a Lipschitz domain if its boundary $\partial U$ is compact and if locally near each boundary point $U$ can be represented as the set above the graph of a Lipschitz function. Thus $U$ could be a bounded Lipschitz domain, or $U$ could be $\mR^n \setminus \overline{\Omega}$ where $\Omega$ is a bounded Lipschitz domain. If $U$ is a Lipschitz domain, then (with natural identifications, see \cite{McLean}, \cite{Triebel_Lipschitz}) 
\begin{gather*}
\widetilde{H}^s(U) = H^s_{\overline{U}}(\mR^n), \quad s \in \mR, \\
H^s_{\overline{U}}(\mR^n)^* = H^{-s}(U) \text{ and } H^s(U)^* = H^{-s}_{\overline{U}}(\mR^n), \quad s \in \mR, \\
H^s(U) = H^s_{\overline{U}}(\mR^n) = H^s_0(U), \quad -1/2 < s < 1/2.
\end{gather*}

\subsection{Fractional Laplacian}

Let $a > -n/2$ and consider the fractional Laplacian in $\mR^n$, 
\[
(-\Delta)^a u = \mF^{-1} \{ \abs{\xi}^{2a} \hat{u}(\xi) \}, \qquad u \in \mS,
\]
where $\mS$ denotes Schwartz space in $\mR^n$. If $\psi \in C^{\infty}_c(\mR^n)$ with $\psi=1$ near $0$, splitting $\abs{\xi}^{2a} = \psi(\xi) \abs{\xi}^{2a} + (1-\psi(\xi)) \abs{\xi}^{2a}$ and using the assumption $a > -n/2$ shows that $\abs{\xi}^{2a}$ is the sum of an $L^1$ function and a smooth function whose derivatives grow at most polynomially. Thus $(-\Delta)^a$ for $a > -n/2$ is a continuous map from $\mS$ to $L^{\infty}$.

There are many other definitions of the fractional Laplacian \cite{Kwasnicki}. For instance, if $0 < a < 1$ it is given by the principal value integral 
\[
(-\Delta)^a u(x) = c_{n,a} \ \mathrm{p.v.} \int_{\mR^n} \frac{u(x)-u(y)}{\abs{x-y}^{n+2a}} \,dy.
\]
We next extend $(-\Delta)^a$ to act on larger spaces. In particular, if $a \geq 0$, then $(-\Delta)^a$ will be well defined on $H^s(\mR^n)$ for any $s \in \mR$.

\begin{Lemma} \label{lemma_fractional_mapping_properties}
If $a \geq 0$, the fractional Laplacian extends as a bounded map 
\[
(-\Delta)^a: \ H^s(\mR^n) \to H^{s-2a}(\mR^n)
\]
whenever $s \in \mR$. If $-n/2 < a < 0$, the fractional Laplacian $(-\Delta)^a$ is the Riesz potential 
\[
(-\Delta)^a u = I_{2\abs{a}} u = \frac{c_{n,a}}{\abs{\,\cdot\,}^{n-2\abs{a}}} \ast u
\]
and it extends as a bounded map 
\[
(-\Delta)^a: L^p(\mR^n) \to L^{\frac{np}{n-2\abs{a} p}}(\mR^n), \qquad 1 < p < n/(2\abs{a}).
\]
\end{Lemma}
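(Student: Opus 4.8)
The plan is to reduce the first statement to an $L^2$ Fourier-multiplier estimate via Plancherel's theorem together with the elementary bound $\abs{\xi} \leq \br{\xi}$, and to reduce the second statement to the classical identification of $\mF^{-1}\{\abs{\xi}^{2a}\}$ with the Riesz kernel plus the Hardy--Littlewood--Sobolev inequality. The only genuine analytic inputs are the Fourier transform of a negative power of $\abs{\xi}$ and the Hardy--Littlewood--Sobolev inequality for Riesz potentials, both classical (cf.\ \cite{Hormander1}); the point demanding the most care is simply checking that $\abs{\xi}^{2a}\hat u(\xi)$ is a bona fide tempered distribution for $u \in H^s(\mR^n)$, so that $(-\Delta)^a u$ is well defined before it is estimated, and this is where $a > -n/2$ enters through the local integrability of $\abs{\xi}^{2a}$ near the origin.

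Consider first $a \geq 0$. If $u \in H^s(\mR^n)$ then $\hat u \in \br{\xi}^{-s}L^2 \subset L^2_{loc}$, so $\abs{\xi}^{2a}\hat u(\xi)$ is locally integrable (as $2a \geq 0$) and of at most polynomial growth at infinity, hence a tempered distribution; this lets us set $(-\Delta)^a u := \mF^{-1}\{\abs{\xi}^{2a}\hat u(\xi)\}$ for every $u \in H^s(\mR^n)$, and since $\mS$ is dense in $H^s(\mR^n)$ this is the unique continuous extension of the original operator defined on $\mS$. Using $\abs{\xi}^{2a}\br{\xi}^{-2a} = (\abs{\xi}/\br{\xi})^{2a} \leq 1$, Plancherel's theorem gives
\[
\norm{(-\Delta)^a u}_{H^{s-2a}(\mR^n)}^2 = \int_{\mR^n} \br{\xi}^{2(s-2a)}\abs{\xi}^{4a}\abs{\hat u(\xi)}^2\, d\xi \leq \int_{\mR^n} \br{\xi}^{2s}\abs{\hat u(\xi)}^2\, d\xi = \norm{u}_{H^s(\mR^n)}^2,
\]
so $(-\Delta)^a : H^s(\mR^n) \to H^{s-2a}(\mR^n)$ is bounded (with norm at most $1$) for every $s \in \mR$.

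Now let $-n/2 < a < 0$ and put $\alpha = 2\abs{a} \in (0,n)$. The function $\abs{\xi}^{-\alpha}$ is a homogeneous, locally integrable tempered distribution whose inverse Fourier transform is $c_{n,a}\abs{\,\cdot\,}^{-(n-\alpha)}$ (a standard computation with homogeneous distributions), so on $\mS$ we have $(-\Delta)^a u = \mF^{-1}\{\abs{\xi}^{-\alpha}\} \ast u = I_\alpha u$, and $I_\alpha$ extends by density to the relevant $L^p$ spaces. The claimed bound $I_\alpha : L^p(\mR^n) \to L^{np/(n-\alpha p)}(\mR^n)$ for $1 < p < n/\alpha$ is exactly the Hardy--Littlewood--Sobolev inequality, which completes the proof.
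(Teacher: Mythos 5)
Your proof is correct and follows essentially the same route as the paper: for $a\geq 0$ both arguments reduce to Plancherel plus the observation that $m(\xi)=\abs{\xi}^{2a}\br{\xi}^{-2a}$ is a bounded multiplier, and for $-n/2<a<0$ both invoke the Hardy--Littlewood--Sobolev inequality for the Riesz potential. You add some worthwhile extra care about why $(-\Delta)^a u$ is well defined as a tempered distribution and about the identification of $\mF^{-1}\{\abs{\xi}^{-2\abs{a}}\}$ with the Riesz kernel, but the substance of the argument is identical.
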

\begin{proof}
If $u \in \mS$, then 
\[
\norm{(-\Delta)^a u}_{H^{s-2a}} = \norm{\mF^{-1} \{ m(\xi) \br{\xi}^{s} \hat{u}(\xi) \} }_{L^2}
\]
where $m(\xi) = \br{\xi}^{-2a} \abs{\xi}^{2a}$ is bounded and hence a Fourier multiplier on $L^2$, showing that $\norm{(-\Delta)^a u}_{H^{s-2a}} \leq C \norm{u}_{H^{s}}$. The second statement is the Hardy-Littlewood-Sobolev inequality \cite[Theorem 4.5.3]{Hormander1}.
\end{proof}

\begin{Remark}
If $a \geq 0$, the fractional Laplacian also extends as a bounded map 
\begin{align*}
(-\Delta)^a:& \ W^{s,p}(\mR^n) \to W^{s-2a,p}(\mR^n), \\
(-\Delta)^a:& \ C^s_*(\mR^n) \to C^{s-2a}_*(\mR^n),
\end{align*}
whenever $s \in \mR$ and $1 < p < \infty$, where $W^{s,p}$ are the usual $L^p$ Sobolev (Bessel potential) spaces and $C^s_*$ are the Zygmund spaces (see \cite{Taylor3}). An even larger domain for $(-\Delta)^a$ is obtained as in \cite{Silvestre} by considering the test function space 
\[
\mS_a = \{ u \in C^{\infty}(\mR^n) \,;\, \br{\,\cdot\,}^{n+2a} \partial^{\alpha} u \in L^{\infty}(\mR^n) \text{ for any multi-index $\alpha$} \},
\]
equipped with the topology induced by the seminorms $\norm{\br{\,\cdot\,}^{n+2a} \partial^{\alpha} u}_{L^{\infty}}$. Then $(-\Delta)^a$ is continuous from $\mS$ to $\mS_a$ and extends to the dual 
\begin{gather*}
\mS_a' = \{ u \in \mS'(\mR^n) \,;\, u = \sum_{\abs{\alpha} \leq m} \partial^{\alpha} u_{\alpha} \text{ for some $m \geq 0$ and} \\ \hspace{180pt} \text{$u_{\alpha} \in \br{\,\cdot\,}^{n+2a} L^{\infty}(\mR^n)$} \}.
\end{gather*}
However, in this article it suffices to work with the spaces $H^s(\mR^n)$.
\end{Remark}

\subsection{Dirichlet problem}

Next we restrict our attention to nonlocal operators 
\[
(-\Delta)^s,  \ \ 0 < s < 1,
\]
and consider the solvability of the Dirichlet problem 
\begin{align*}
((-\Delta)^s + q)u &= F \quad \text{in $\Omega$}, \\
u &= f \quad \text{in $\Omega_e$},
\end{align*}
where, for a bounded open set $\Omega \subset \mR^n$, we denote the exterior domain by $\Omega_e = \mR^n \setminus \overline{\Omega}$. We also denote the restriction to $\Omega$ by 
\[
r_{\Omega} u = u|_{\Omega},
\]
and if $U \subset \mR^n$ is open and $u, v \in L^2(U)$ we write 
\[
(u,v)_U = \int_U u v \,dx.
\]

\begin{Lemma} \label{lemma_fractional_dirichlet_solvability}
Let $\Omega \subset \mR^n$ be a bounded open set, let $0 < s < 1$, and let $q \in L^{\infty}(\Omega)$. Let $B_q$ be the bilinear form defined for $v, w \in H^s(\mR^n)$ by 
\[
B_q(v,w) = ((-\Delta)^{s/2} v, (-\Delta)^{s/2} w)_{\mR^n} + (q r_{\Omega} v, r_{\Omega} w)_{\Omega}.
\]
\begin{tehtratk}
\item[{\rm (a)}]
There is a countable set $\Sigma = \{ \lambda_j \}_{j=1}^{\infty} \subset \mR$, $\lambda_1 \leq \lambda_2 \leq \cdots \to \infty$, with the following property: if $\lambda \in \mR \setminus \Sigma$, then for any $F \in (\widetilde{H}^s(\Omega))^*$ and $f \in H^s(\mR^n)$ there is a unique $u \in H^s(\mR^n)$ satisfying 
\[
B_q(u,w) - \lambda(u,w)_{\mR^n}= F(w) \ \text{ for $w \in \widetilde{H}^s(\Omega)$}, \quad u - f \in \widetilde{H}^s(\Omega).
\]
One has the norm estimate 
\[
\norm{u}_{H^s(\mR^n)} \leq C( \norm{F}_{(\widetilde{H}^s(\Omega))^*} + \norm{f}_{H^s(\mR^n)})
\]
with $C$ independent of $F$ and $f$.

\item[{\rm (b)}]
The function $u$ in (a) is also the unique $u \in H^s(\mR^n)$ satisfying  
\[
r_{\Omega} ((-\Delta)^s + q - \lambda) u = F \text{ in the sense of distributions in $\Omega$}
\]
and $u-f \in \widetilde{H}^s(\Omega)$. 

\item[{\rm (c)}]
One has $0 \notin \Sigma$ if \eqref{dirichlet_uniqueness} holds. If $q \geq 0$, then one has $\Sigma \subset (0,\infty)$ and \eqref{dirichlet_uniqueness} always holds.
\end{tehtratk}
\end{Lemma}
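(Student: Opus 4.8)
The plan is to treat Lemma~\ref{lemma_fractional_dirichlet_solvability} as the standard Fredholm--spectral theory for a closed semibounded quadratic form, once the fractional Sobolev bookkeeping is pinned down. For part (a), I first reduce to the case $f = 0$ by writing $u = u_0 + f$ with $u_0 \in \widetilde{H}^s(\Omega)$ unknown, so that the problem becomes
\[
B_q(u_0, w) - \lambda (u_0, w)_{\mR^n} = \widetilde{F}(w) \quad \text{for all } w \in \widetilde{H}^s(\Omega),
\]
where $\widetilde{F}(w) = F(w) - B_q(f,w) + \lambda(f,w)_{\mR^n}$ lies in $(\widetilde{H}^s(\Omega))^*$ with $\norm{\widetilde{F}}_{(\widetilde{H}^s(\Omega))^*} \leq C(\norm{F}_{(\widetilde{H}^s(\Omega))^*} + \norm{f}_{H^s(\mR^n)})$; here Lemma~\ref{lemma_fractional_mapping_properties} shows $B_q$ is bounded on $H^s(\mR^n) \times H^s(\mR^n)$. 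Next I establish a Poincar\'e inequality $\norm{v}_{L^2(\mR^n)} \leq C_\Omega \norm{(-\Delta)^{s/2} v}_{L^2(\mR^n)}$ for $v \in \widetilde{H}^s(\Omega)$: each such $v$ is supported in the fixed compact set $\ol{\Omega}$, the embedding $\widetilde{H}^s(\Omega) \hookrightarrow L^2(\mR^n)$ is compact (Rellich, e.g.\ via the Fourier/Fr\'echet--Kolmogorov characterization), and $(-\Delta)^{s/2}$ is injective, so the inequality follows from a compactness--contradiction argument. Since $\br{\xi}^{2s} \leq C(1 + \abs{\xi}^{2s})$ for $0 < s < 1$, this gives $B_q(v,v) + \mu \norm{v}_{L^2(\mR^n)}^2 \geq c\norm{v}_{H^s(\mR^n)}^2$ on $\widetilde{H}^s(\Omega)$ once $\mu$ is large; Lax--Milgram then makes $u_0 \mapsto \bigl(w \mapsto B_q(u_0,w) + \mu(u_0,w)_{\mR^n}\bigr)$ an isomorphism $\widetilde{H}^s(\Omega) \to (\widetilde{H}^s(\Omega))^*$, with bounded solution operator $G_{\lambda_0}$ for $\lambda_0 = -\mu$.

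\emph{General $\lambda$ and the spectrum.} For arbitrary $\lambda$, the equation is equivalent to $(I - (\lambda - \lambda_0)R)u_0 = G_{\lambda_0}\widetilde{F}$ in $\widetilde{H}^s(\Omega)$, where $R$ represents the $L^2(\mR^n)$ inner product with respect to the equivalent inner product $B_q(\cdot,\cdot) - \lambda_0(\cdot,\cdot)_{\mR^n}$; the compact embedding makes $R$ compact and, for that inner product, self-adjoint and nonnegative. By the Fredholm alternative, unique solvability for every $\widetilde{F}$ holds exactly when $(\lambda-\lambda_0)^{-1}$ is not an eigenvalue of $R$, i.e.\ for $\lambda$ outside a discrete set $\Sigma$, which one identifies with the spectrum of the self-adjoint operator $L_q$ attached to the closed semibounded form $B_q$ on $\widetilde{H}^s(\Omega) \subset L^2(\Omega)$; since $L_q$ has compact resolvent, $\Sigma = \{\lambda_j\}$ is bounded below and $\lambda_1 \leq \lambda_2 \leq \cdots \to \infty$. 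For $\lambda \notin \Sigma$ the inverse $(I - (\lambda-\lambda_0)R)^{-1}$ is bounded, yielding $\norm{u_0}_{H^s(\mR^n)} \leq C\norm{\widetilde{F}}_{(\widetilde{H}^s(\Omega))^*}$ and hence the asserted estimate for $u = u_0 + f$.

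\emph{Parts (b) and (c).} For (b), the Parseval identity $((-\Delta)^{s/2}v,(-\Delta)^{s/2}w)_{\mR^n} = \br{(-\Delta)^s v, w}$ for $v \in H^s(\mR^n)$, $w \in \widetilde{H}^s(\Omega)$ (using $(-\Delta)^s v \in H^{-s}(\mR^n)$ from Lemma~\ref{lemma_fractional_mapping_properties}) gives, for $w \in C^{\infty}_c(\Omega)$, that $B_q(u,w) - \lambda(u,w)_{\mR^n} = \br{r_\Omega((-\Delta)^s + q - \lambda)u, w}$; thus the weak equation tested against $C^{\infty}_c(\Omega)$ is exactly the distributional equation, and density of $C^{\infty}_c(\Omega)$ in $\widetilde{H}^s(\Omega)$ together with continuity of each term gives the converse, while uniqueness among distributional solutions with $u - f \in \widetilde{H}^s(\Omega)$ follows from (a). For (c): if \eqref{dirichlet_uniqueness} holds but $0 \in \Sigma$, there is $0 \neq u \in \widetilde{H}^s(\Omega) \subset H^s_{\ol{\Omega}}(\mR^n)$ with $B_q(u,\cdot) = 0$, hence by (b) $r_\Omega((-\Delta)^s+q)u = 0$ and $u|_{\Omega_e} = 0$, contradicting \eqref{dirichlet_uniqueness}; so $0 \notin \Sigma$. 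If $q \geq 0$, then $B_q(v,v) \geq \norm{(-\Delta)^{s/2}v}_{L^2(\mR^n)}^2 > 0$ for $0 \neq v \in \widetilde{H}^s(\Omega)$, so the min--max formula forces $\lambda_1 > 0$ and $\Sigma \subset (0,\infty)$; and pairing $(-\Delta)^s u \in H^{-s}(\mR^n)$ with $u$, using $\mathrm{supp}(u) \subset \ol{\Omega}$ and the equation, gives $\norm{(-\Delta)^{s/2}u}_{L^2(\mR^n)}^2 = -(qu,u)_\Omega \leq 0$, so $u \equiv 0$ and \eqref{dirichlet_uniqueness} holds.

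\emph{Expected main obstacle.} There is no deep obstruction here; the essential work is the fractional functional-analytic setup rather than a new idea: establishing Poincar\'e and Rellich for a general bounded open $\Omega$ (only the support condition $\mathrm{supp}(v) \subset \ol{\Omega}$ is available, not boundary regularity), and handling the dualities among $H^{-s}(\mR^n)$, $(\widetilde{H}^s(\Omega))^*$ and $\widetilde{H}^s(\Omega)$ carefully enough that the weak and distributional problems genuinely coincide and that $\widetilde{H}^s(\Omega)$ --- not the a priori larger $H^s_{\ol{\Omega}}(\mR^n)$ --- is recognized as the correct test space. Once this is in place, the Fredholm alternative and the spectral theorem for compact self-adjoint operators supply everything else mechanically.
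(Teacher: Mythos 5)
Your proposal follows essentially the same route as the paper: reduce to $f=0$, establish coercivity of $B_q + \mu(\cdot,\cdot)$ on $\widetilde{H}^s(\Omega)$ via a Poincar\'e inequality, invert by Riesz representation/Lax--Milgram, and then appeal to the Fredholm alternative and the spectral theorem for the compact self-adjoint solution operator on $L^2(\Omega)$. Parts (b) and (c) are argued in the same way (density of $C^{\infty}_c(\Omega)$ in $\widetilde{H}^s(\Omega)$, and the observation that \eqref{dirichlet_uniqueness} implies no zero eigenvector in $\widetilde{H}^s(\Omega)$).

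One modest difference is worth flagging. You obtain the Poincar\'e inequality $\norm{v}_{L^2} \leq C\norm{(-\Delta)^{s/2}v}_{L^2}$ for $v \in \widetilde{H}^s(\Omega)$ by a compactness--contradiction argument (Rellich plus injectivity of $(-\Delta)^{s/2}$), whereas the paper proves it directly and quantitatively: writing $v = I_s(-\Delta)^{s/2}v$ with $I_s$ the Riesz potential, and applying Hardy--Littlewood--Sobolev together with H\"older on the bounded set $\Omega$. Both are correct, but the paper's version is more elementary (no weak compactness needed) and gives an explicit constant; it is worth absorbing that trick since it recurs whenever one needs fractional Poincar\'e on sets that are merely bounded.

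One place you should tighten is the final step of (c). You pair the equation against $u$ itself to get $\norm{(-\Delta)^{s/2}u}_{L^2}^2 + (qu,u)_{\Omega} = 0$. The equation $r_{\Omega}((-\Delta)^s+q)u = 0$ only says the distribution $(-\Delta)^s u + e^+(q r_{\Omega}u)$ vanishes when tested against $C^{\infty}_c(\Omega)$, hence against $\widetilde{H}^s(\Omega)$; to test against $u$ you implicitly use $u \in \widetilde{H}^s(\Omega)$, whereas the hypothesis only gives $u \in H^s_{\ol{\Omega}}$, which a priori may be strictly larger for a general bounded open $\Omega$ (the paper explicitly calls out this distinction at the start of its proof of (c)). The paper sidesteps this by saying ``redo (a) with $\widetilde{H}^s(\Omega)$ replaced by $H^s_{\ol{\Omega}}$'' --- i.e., run the entire coercivity/Riesz argument on the closed subspace $H^s_{\ol{\Omega}}$, where $B_q$ is still coercive when $q \geq 0$ and $u$ is a legitimate test function by construction. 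You should either make that substitution explicit, or justify why testing against $u \in H^s_{\ol{\Omega}}$ is admissible; as written the pairing step does not quite follow from what you have established.
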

\begin{proof}
(a) If $u = f + v$, it is enough to find $v \in \widetilde{H}^s(\Omega)$ solving the equivalent problem 
\[
B_q(v,w) - \lambda(v,w)_{\mR^n} = \tilde{F}(w), \quad w \in \widetilde{H}^s(\Omega),
\]
for a suitable $\tilde{F} \in (\widetilde{H}^s(\Omega))^*$. Consider the symmetric bilinear form $B_q(v,w)$ for $v, w \in \widetilde{H}^s(\Omega)$. Now $v = I_{s} (-\Delta)^{s/2} v$ for any $v \in H^s$ where $I_s = (-\Delta)^{-s/2}$ is the Riesz potential (since this holds on the dense subset consisting of those $v$ with $\hat{v}Ê= 0$ near $0$), and thus, using the fact that $\Omega$ is bounded and the Hardy-Littlewood-Sobolev inequality,  
\[
\norm{v}_{L^2} \leq C_{\Omega} \norm{v}_{L^{\frac{2n}{n-2s}}} \leq C \norm{(-\Delta)^{s/2} v}_{L^2}, \quad v \in \widetilde{H}^s(\Omega).
\]
If $\mu = \norm{q_-}_{L^{\infty}(\Omega)}$ where $q_-(x) = -\min\{Ê0, q(x) \}$, then for $v \in \widetilde{H}^s(\Omega)$, 
\begin{align*}
B_q(v,v) + \mu(v,v)_{\mR^n} &\geq \norm{(-\Delta)^{s/2} v}_{L^2}^2 \geq c (\norm{v}_{L^2}^2 + \norm{(-\Delta)^{s/2} v}_{L^2}^2 ) \\
 &\geq c \norm{v}_{H^s}^2.
\end{align*}
By the Riesz representation theorem, there is a unique $v = G_{\mu} \tilde{F}$ in $\widetilde{H}^s(\Omega)$ satisfying $B_q(v,w) + \mu(v,w)_{\mR^n}= \tilde{F}(w)$ for $w \in \widetilde{H}^s(\Omega)$. Now 
\[
B_q(v,\,\cdot\,) - \lambda(v,\,\cdot\,) = \tilde{F}(\,\cdot\,) \text{ on $\widetilde{H}^s(\Omega)$} \ \Longleftrightarrow \ v = G_{\mu}\left[ (\mu+\lambda) v + \tilde{F} \right].
\]
The operator $G_{\mu}$ is bounded $(\widetilde{H}^s(\Omega))^* \to \widetilde{H}^s(\Omega)$, and by compact Sobolev embedding it gives rise to a compact, self-adjoint, positive definite operator $L^2(\Omega) \to L^2(\Omega)$. The spectral theorem for compact self-adjoint operators proves (a); in particular the eigenvalues of $G_{\mu}$ are $\{Ê\frac{1}{\lambda_j+\mu} \}_{j=1}^{\infty}$, and $\Sigma \subset (-\norm{q_-}_{L^{\infty}}, \infty)$.

(b) The stated condition is equivalent with 
\[
B_q(u,v) - \lambda(u,v)_{\mR^n} = F(v) \ \text{ for $v \in C^{\infty}_c(\Omega)$}, \qquad u-f \in \widetilde{H}^s(\Omega).
\]
This is equivalent with the condition in (a) since $C^{\infty}_c(\Omega)$ is dense in $\widetilde{H}^s(\Omega)$.

(c) Note that \eqref{dirichlet_uniqueness} states that any solution in $H^s_{\ol{\Omega}}$ is identically zero. This is stronger than stating that any solution in $\widetilde{H}^s(\Omega)$ is zero, which is equivalent with $0 \notin \Sigma$ by the Fredholm alternative. If $q \geq 0$, then the argument in (a), with $\widetilde{H}^s(\Omega)$ replaced by $H^s_{\ol{\Omega}}$, implies \eqref{dirichlet_uniqueness}.
\end{proof}

\subsection*{DN map}

By analogy with the case $s=1$, we may define the DN map for the fractional Schr\"odinger equation via the bilinear form $B_q$ for the equation given in Lemma \ref{lemma_fractional_dirichlet_solvability}.

\begin{Lemma} \label{lemma_dnmap_definition}
Let $\Omega \subset \mR^n$ be a bounded open set, let $0 < s < 1$, and let $q \in L^{\infty}(\Omega)$ satisfy \eqref{dirichlet_uniqueness}. There is a bounded linear map 
\[
\Lambda_q: X \to X^*,
\]
where $X$ is the abstract trace space $X = H^s(\mR^n) / \widetilde{H}^s(\Omega)$, defined by 
\[
(\Lambda_q [f], [g]) = B_q(u_f, g), \qquad f, g \in H^s(\mR^n),
\]
where $u_f \in H^s(\mR^n)$ solves $((-\Delta)^s + q)u = 0$ in $\Omega$ with $u - f \in \widetilde{H}^s(\Omega)$. One has 
\[
(\Lambda_q [f], [g]) = ([f], \Lambda_q [g]), \qquad f, g \in H^s(\mR^n).
\]
\end{Lemma}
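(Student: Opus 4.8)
The plan is to check four things in turn: that the solution $u_f$ supplied by Lemma~\ref{lemma_fractional_dirichlet_solvability} depends only on the class $[f] \in X$; that the expression $B_q(u_f,g)$ then descends to a well-defined bilinear form on $X \times X$; that this form is bounded for the quotient norm; and finally the symmetry identity. The substance is concentrated in the first step, where the hypothesis \eqref{dirichlet_uniqueness} is used; the rest is bookkeeping with the quotient $X = H^s(\mR^n)/\widetilde{H}^s(\Omega)$. For the first step, suppose $f_1, f_2 \in H^s(\mR^n)$ with $f_1 - f_2 \in \widetilde{H}^s(\Omega)$ and let $u_{f_1}, u_{f_2}$ be the corresponding solutions. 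Then $u_{f_1} - u_{f_2} = (u_{f_1} - f_1) - (u_{f_2} - f_2) + (f_1 - f_2) \in \widetilde{H}^s(\Omega)$, and $B_q(u_{f_1} - u_{f_2}, w) = 0$ for every $w \in \widetilde{H}^s(\Omega)$. By Lemma~\ref{lemma_fractional_dirichlet_solvability}(a) applied with $\lambda = 0$ — which lies outside $\Sigma$ by part (c), since \eqref{dirichlet_uniqueness} holds — the only such function is $u_{f_1} - u_{f_2} \equiv 0$. Hence $u_f$ depends on $[f]$ alone, and the a priori estimate of Lemma~\ref{lemma_fractional_dirichlet_solvability}(a) with $F = 0$ gives $\norm{u_f}_{H^s(\mR^n)} \leq C \norm{f'}_{H^s(\mR^n)}$ for every representative $f' \in [f]$, hence $\norm{u_f}_{H^s(\mR^n)} \leq C \norm{[f]}_X$ after taking the infimum over representatives.

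Next, for fixed $f$ the map $g \mapsto B_q(u_f,g)$ is linear on $H^s(\mR^n)$ and vanishes on $\widetilde{H}^s(\Omega)$, because $u_f$ solves $((-\Delta)^s + q)u = 0$ in $\Omega$ weakly, i.e.\ $B_q(u_f, w) = 0$ for all $w \in \widetilde{H}^s(\Omega)$. Combined with the previous step, this shows $(f,g) \mapsto B_q(u_f, g)$ factors through $X \times X$, so $(\Lambda_q[f],[g]) := B_q(u_f,g)$ is well defined. Boundedness of $B_q$ on $H^s(\mR^n) \times H^s(\mR^n)$ (immediate from Lemma~\ref{lemma_fractional_mapping_properties} and $q \in L^{\infty}(\Omega)$) together with $\norm{u_f}_{H^s(\mR^n)} \leq C\norm{[f]}_X$ yields $\abs{(\Lambda_q[f],[g])} \leq C \norm{u_f}_{H^s}\norm{g}_{H^s} \leq C\norm{[f]}_X \norm{g}_{H^s}$ for every representative $g$; taking the infimum over $g \in [g]$ gives $\abs{(\Lambda_q[f],[g])} \leq C\norm{[f]}_X\norm{[g]}_X$, so $\Lambda_q[f] \in X^*$ and $\Lambda_q : X \to X^*$ is bounded.

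Finally, for the symmetry: since the pairing depends only on the classes, we are free to choose the representative $u_g \in [g]$ (note $u_g - g \in \widetilde{H}^s(\Omega)$, so $[u_g] = [g]$), which gives $(\Lambda_q[f],[g]) = B_q(u_f, u_g)$; symmetrically $(\Lambda_q[g],[f]) = B_q(u_g, u_f)$. The form $B_q$ is symmetric — each of the two terms in its definition is symmetric for real-valued $v, w$ — so $B_q(u_f,u_g) = B_q(u_g,u_f)$, and therefore $(\Lambda_q[f],[g]) = (\Lambda_q[g],[f]) = ([f],\Lambda_q[g])$. The only place where anything beyond formal manipulation is needed is the uniqueness invoked in the first step, and there the work has already been done in Lemma~\ref{lemma_fractional_dirichlet_solvability}; I do not anticipate a genuine obstacle.
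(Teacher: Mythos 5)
Your proof is correct and follows the same route as the paper's: the paper compresses your first two steps into the single observation that $B_q(u_{f+\varphi}, g+\psi) = B_q(u_f,g)$ for $\varphi,\psi \in \widetilde{H}^s(\Omega)$, then runs the same boundedness estimate and obtains self-adjointness by the same choice $g = u_g$. You have simply unpacked the well-definedness claim into its two constituent facts (uniqueness of $u_f$ given $[f]$, and vanishing of $B_q(u_f,\cdot)$ on $\widetilde{H}^s(\Omega)$), which is a faithful elaboration rather than a different argument.
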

\begin{proof}
Let $f, g \in H^s(\mR^n)$. Since $B_q(u_{f+\varphi}, g+\psi) = B_q(u_f, g)$ for $\varphi, \psi$ in $\widetilde{H}^s(\Omega)$, the expression $(\Lambda_q [f], [g]) = B_q(u_f,g)$ is well defined and 
\begin{align*}
\abs{(\Lambda_q [f], [g])} &\leq \norm{(-\Delta)^{s/2} u_f}_{L^2} \norm{(-\Delta)^{s/2} g}_{L^2} + \norm{q}_{L^{\infty}} \norm{u_f}_{L^2} \norm{g}_{L^2} \\
 &\leq C \norm{u_f}_{H^s} \norm{g}_{H^s} \leq C \norm{f}_{H^s} \norm{g}_{H^s}.
\end{align*}
Thus $\abs{(\Lambda_q [f], [g])} \leq CÊ\norm{[f]}_X \norm{[g]}_X$ so $\Lambda_q$ is well-defined and bounded, and self-adjointness follows by taking $g = u_g$.
\end{proof}

If $\Omega$ has Lipschitz boundary, then $X = H^s(\Omega_e)$ and $X^* = H^{-s}_{\ol{\Omega}_e}$ with natural identifications, but functions in $H^{-s}_{\ol{\Omega}_e}$ are only uniquely determined by their restrictions to $\Omega_e$ if $ s < 1/2$. Thus, for Lipschitz domains, one should think of the DN map as an operator 
\[
\Lambda_q: H^s(\Omega_e) \to H^{-s}_{\ol{\Omega}_e}(\mR^n).
\]

The integral identity that allows to solve the inverse problem is a direct consequence of Lemma \ref{lemma_dnmap_definition}. For simplicity, we will write $f$ instead of $[f]$ for elements of $X$.

\begin{Lemma} \label{lemma_integral_identity}
Let $\Omega \subset \mR^n$ be a bounded open set, let $0 < s < 1$, and let $q_1, q_2 \in L^{\infty}(\Omega)$ satisfy \eqref{dirichlet_uniqueness}. For any $f_1, f_2 \in X$ one has 
\[
( (\Lambda_{q_1} - \Lambda_{q_2}) f_1, f_2) = ( (q_1 - q_2) r_{\Omega} u_1, r_{\Omega} u_2)_{\Omega}
\]
where $u_j \in H^s(\mR^n)$ solves $((-\Delta)^s + q_j) u_j = 0$ in $\Omega$ with $u_j|_{\Omega_e} = f_j$.
\end{Lemma}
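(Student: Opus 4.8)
The plan is to expand both sides of the claimed identity using the definition of $\Lambda_{q_j}$ from Lemma \ref{lemma_dnmap_definition} and then exploit the fact that the bilinear forms $B_{q_1}$ and $B_{q_2}$ differ only in the zeroth order term supported in $\Omega$. First I would note that, since $u_j - f_j \in \widetilde{H}^s(\Omega)$, the function $u_j$ is itself a representative of the class $f_j \in X$, and it is precisely the solution $u_{f_j}$ (for the potential $q_j$) appearing in Lemma \ref{lemma_dnmap_definition}. Choosing $u_2$ as a representative of $f_2$, the definition gives
\[
(\Lambda_{q_1} f_1, f_2) = B_{q_1}(u_1, u_2).
\]
For the second term I would invoke the self-adjointness from Lemma \ref{lemma_dnmap_definition}: $(\Lambda_{q_2} f_1, f_2) = (\Lambda_{q_2} f_2, f_1)$ (the $X$--$X^*$ pairing being symmetric on real elements), and then apply the definition with $u_1$ as a representative of $f_1$ to obtain $(\Lambda_{q_2} f_2, f_1) = B_{q_2}(u_2, u_1) = B_{q_2}(u_1, u_2)$, using the symmetry of $B_{q_2}$. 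Hence
\[
((\Lambda_{q_1} - \Lambda_{q_2}) f_1, f_2) = B_{q_1}(u_1, u_2) - B_{q_2}(u_1, u_2).
\]

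Finally I would subtract the two bilinear forms directly from their definition: each $B_{q_j}(u_1, u_2)$ contains the identical nonlocal term $((-\Delta)^{s/2} u_1, (-\Delta)^{s/2} u_2)_{\mR^n}$, which cancels, leaving only the difference of the potential terms,
\[
B_{q_1}(u_1, u_2) - B_{q_2}(u_1, u_2) = ((q_1 - q_2) r_{\Omega} u_1, r_{\Omega} u_2)_{\Omega},
\]
which is exactly the asserted identity.

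The computation is entirely routine and uses no new analytic input beyond Lemma \ref{lemma_dnmap_definition}. The only point requiring a little care is the bookkeeping in the abstract trace space $X = H^s(\mR^n)/\widetilde{H}^s(\Omega)$: one must check that $\Lambda_{q_j}$ is being evaluated on the correct classes, that $u_j$ is a legitimate representative of $f_j$ (this is where the exterior condition $u_j|_{\Omega_e} = f_j$, i.e.\ $u_j - f_j \in \widetilde{H}^s(\Omega)$, enters), and that the well-definedness and self-adjointness already established in Lemma \ref{lemma_dnmap_definition} justify these particular choices of representatives. I do not anticipate any genuine obstacle here.
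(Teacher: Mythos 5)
Your proposal is correct and follows essentially the same route as the paper: both use the self-adjointness of $\Lambda_{q_2}$ to replace $(\Lambda_{q_2}f_1,f_2)$ by $(f_1,\Lambda_{q_2}f_2)$, then expand both terms via the bilinear form $B_{q_j}$ using $u_1$ and $u_2$ as representatives, and finally cancel the common nonlocal term. The bookkeeping in $X$ that you flag is exactly the point the paper relies on implicitly, and your treatment of it is sound.
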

\begin{proof}
One has 
\begin{align*}
( (\Lambda_{q_1} - \Lambda_{q_2}) f_1, f_2) &= ( \Lambda_{q_1} f_1, f_2) - ( f_1, \Lambda_{q_2} f_2) 
 = B_{q_1}(u_1,u_2) - B_{q_2}(u_1,u_2) \\
  &= ( (q_1 - q_2) r_{\Omega} u_1, r_{\Omega} u_2)_{\Omega}. \qedhere
\end{align*}
\end{proof}

\section{The DN map} \label{sec_dnmap_description}

The abstract definition of the DN map $\Lambda_q$ in Section \ref{sec_preliminaries} is sufficient for the formulation and solution of the inverse problem. However, in this section we will give more concrete descriptions of the DN map, valid under stronger regularity assumptions. For simplicity we assume that the boundary and the potential are $C^{\infty}$.

\subsection*{DN map and $(-\Delta)^s$}

\begin{Lemma} \label{lemma_dnmap_pointwise}
Let $\Omega \subset \mR^n$ be a bounded open set with $C^{\infty}$ boundary, let $0 < s < 1$, and let $q \in C^{\infty}_c(\Omega)$ satisfy \eqref{dirichlet_uniqueness}. For any $\beta \geq 0$ satisfying $s-1/2 < \beta < 1/2$, the restriction of $\Lambda_q$ to $H^{s+\beta}(\Omega_e)$ is the map 
\[
\Lambda_q: H^{s+\beta}(\Omega_e) \to H^{-s+\beta}(\Omega_e), \ \ \Lambda_q f = (-\Delta)^s u_f|_{\Omega_e}
\]
where $u_f \in H^{s+\beta}(\mR^n)$ solves $((-\Delta)^s + q)u = 0$ in $\Omega$ with $u|_{\Omega_e} = f$.
\end{Lemma}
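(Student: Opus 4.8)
My plan is to separate the statement into two claims and verify them in turn: (i) for $f \in H^{s+\beta}(\Omega_e)$ the solution $u_f$ actually lies in $H^{s+\beta}(\mR^n)$, so that by Lemma \ref{lemma_fractional_mapping_properties} the function $w := (-\Delta)^s u_f|_{\Omega_e}$ belongs to $H^{-s+\beta}(\Omega_e)$ and depends continuously on $f$; and (ii) this $w$ is precisely the restriction to $\Omega_e$ of the abstract $\Lambda_q f \in X^* = H^{-s}_{\ol{\Omega}_e}(\mR^n)$. I would dispatch (ii) first, since it is only bookkeeping with the definition of $B_q$. For $g \in C^{\infty}_c(\Omega_e)$ one has $r_{\Omega} g = 0$, so Lemma \ref{lemma_dnmap_definition} and Parseval give
\[
(\Lambda_q f, g) = B_q(u_f, g) = ((-\Delta)^{s/2} u_f, (-\Delta)^{s/2} g)_{\mR^n} = \br{(-\Delta)^s u_f, g}_{\mR^n} = \br{w, g}_{\Omega_e},
\]
the middle pairings being legitimate because $u_f \in H^{s+\beta} \subset H^s$ and $g \in C^{\infty}_c$, and the last equality being the definition of restricting the distribution $(-\Delta)^s u_f$ to $\Omega_e$. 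Since $C^{\infty}_c(\Omega_e)$ is dense in $\widetilde{H}^{s-\beta}(\Omega_e)$, whose dual is $H^{-s+\beta}(\Omega_e)$, this identifies $\Lambda_q f|_{\Omega_e}$ with $w$; note that $-1/2 < -s+\beta < 1/2$ by the choice of $\beta$, so $H^{-s+\beta}(\Omega_e) = H^{-s+\beta}_{\ol{\Omega}_e}(\mR^n)$ and the restriction carries no boundary ambiguity.

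For (i), I would first observe that since $q \in C^{\infty}_c(\Omega)$ and $(-\Delta)^s u_f = -q u_f$ in $\Omega$, an interior elliptic bootstrap for the elliptic operator $(-\Delta)^s$ — using that multiplication by $q$ sends $H^t_{\mathrm{loc}}(\Omega)$ into $\widetilde{H}^t(\Omega)$ — shows $u_f \in C^{\infty}$ in $\mathrm{int}(\Omega)$, hence $q u_f \in C^{\infty}_c(\Omega)$. Next I would pick an extension $E f \in H^{s+\beta}(\mR^n)$ of $f$ (available since $\Omega_e$ is Lipschitz) and write $u_f = v + E f$ with $v \in \widetilde{H}^s(\Omega)$, because $u_f - E f$ vanishes in $\Omega_e$. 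Then $v$ satisfies
\[
r_{\Omega}(-\Delta)^s v = - r_{\Omega}(-\Delta)^s(E f) - q u_f =: G \in H^{-s+\beta}(\Omega),
\]
the first term lying in $H^{-s+\beta}(\Omega)$ by Lemma \ref{lemma_fractional_mapping_properties} and the second by the interior smoothness. Invoking the regularity theory for the homogeneous fractional Dirichlet problem (\cite{Grubb}, cf.\ Section \ref{sec_approximation_sobolev}) then upgrades $v$ to $\widetilde{H}^{s+\beta}(\Omega)$, hence $u_f \in H^{s+\beta}(\mR^n)$, and boundedness of $f \mapsto (-\Delta)^s u_f|_{\Omega_e}$ as a map $H^{s+\beta}(\Omega_e) \to H^{-s+\beta}(\Omega_e)$ follows by composing the solution estimate with Lemma \ref{lemma_fractional_mapping_properties}.

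I expect the boundary regularity $v \in \widetilde{H}^{s+\beta}(\Omega)$ to be the one genuinely nontrivial step. The constraint $\beta < 1/2$ is sharp here: Dirichlet solutions of $(-\Delta)^s$ behave like $\mathrm{dist}(\cdot,\partial\Omega)^s$ near the boundary, which lies in $H^{s+\beta}_{\mathrm{loc}}$ only for $\beta < 1/2$; and $\beta > s-1/2$ is what keeps the data space $H^{-s+\beta}$ on the "good" interval $(-1/2,1/2)$, where this transmission-type regularity applies directly and where the target is unambiguously a space of functions on $\Omega_e$. Everything else is manipulation of the bilinear form and Sobolev duality, and the $C^{\infty}$ hypotheses on $\partial\Omega$ and $q$ are imposed precisely to license this regularity input.
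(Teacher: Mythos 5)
Your overall strategy matches the paper's: establish higher Sobolev regularity of $u_f$ using the Vishik--Eskin/Grubb theory, then identify $\Lambda_q f$ by unwinding the bilinear form $B_q$. Your part (i) is a sound reformulation of what the paper does --- you run an interior $\Psi$DO bootstrap to make $q u_f$ smooth, absorb it into the right-hand side, and then invoke regularity for $r_\Omega (-\Delta)^s$ with data in $H^{-s+\beta}(\Omega)$; the paper applies \cite[Theorem 3.1]{Grubb} to $(-\Delta)^s+q$ directly after extending $f$, and both routes need the $\beta$-independence of the finite-dimensional defect spaces plus triviality at $\beta = 0$ to upgrade Fredholm to unique solvability. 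You correctly diagnose the roles of the two endpoints $s-1/2 < \beta < 1/2$.

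Part (ii), however, has a real gap when $s > 1/2$. Testing $\Lambda_q f$ only against $g \in C^{\infty}_c(\Omega_e)$ determines the distributional restriction $\Lambda_q f|_{\Omega_e}$, but not the element $\Lambda_q f$ of $X^* = H^{-s}_{\ol{\Omega}_e}(\mR^n)$: for $s > 1/2$ there are nonzero functionals in $H^{-s}_{\ol{\Omega}_e}$ supported on $\partial\Omega$ (multiples of surface measure, say) which annihilate $C^{\infty}_c(\Omega_e)$, and your computation does not exclude such a contribution. The Lemma is asserting precisely that no such boundary term appears --- that $\Lambda_q f$ lies in the subspace $H^{-s+\beta}_{\ol{\Omega}_e} = H^{-s+\beta}(\Omega_e)$ of $X^*$ --- which is exactly the point the paper flags right after Lemma \ref{lemma_dnmap_definition}. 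The paper therefore tests against arbitrary $g \in H^{s+\beta}(\Omega_e)$ via an extension $e_g$, which requires the splitting identity \eqref{halpha_omega_splitting}, namely $(u,v)_{\mR^n} = (r_\Omega u, r_\Omega v)_\Omega + (r_{\Omega_e} u, r_{\Omega_e} v)_{\Omega_e}$ for $u \in H^{-\alpha}$, $v \in H^{\alpha}$, $|\alpha| < 1/2$; this relies on $\chi_\Omega$ being a pointwise multiplier on $H^{\alpha}(\mR^n)$ and is a genuine step you have skipped. Your simplification is harmless for the inverse problem (Theorem \ref{thm_main} only uses $\Lambda_q f|_{W_2}$), but it does not prove the Lemma as stated; to close the gap you should either prove \eqref{halpha_omega_splitting} and rerun the computation with general $g$, or argue directly that the boundary-supported part of $\Lambda_q f$ vanishes.
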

\begin{proof}
First we use a result from \cite{VishikEskin}, see also \cite{Grubb}: if $\beta \in [0, 1/2)$, then for any $f \in H^{s+\beta}(\Omega_e)$ there is a unique $u = u_f \in H^{s+\beta}(\mR^n)$ satisfying 
\[
((-\Delta)^s + q) u = 0 \text{ in $\Omega$}, \qquad u|_{\Omega_e} = f.
\]
In fact \cite[Theorem 3.1]{Grubb} asserts Fredholm solvability for the inhomogeneous problem, but the result above can be reduced to this case by taking a $H^{s+\beta}$ extension of $f$ to $\mR^n$, and Fredholm solvability implies unique solvability since the finite dimensional kernel and range complement are independent of $\beta$ by \cite[Theorem 3.5]{Grubb2} and they are trivial when $\beta = 0$ by Lemma \ref{lemma_fractional_dirichlet_solvability}.

Now for $f, g \in H^{s+\beta}(\Omega_e)$ with $\beta \in [0,1/2)$, let $u_f \in H^{s+\beta}(\mR^n)$ be the solution obtained above and let $e_g \in H^{s+\beta}(\mR^n)$ be some extension of $g$. Then, by definition, 
\begin{align*}
(\Lambda_q f, g) &= ((-\Delta)^{s/2} u_f, (-\Delta)^{s/2} e_g)_{\mR^n} + (q r_{\Omega} u_f, r_{\Omega} e_g)_{\Omega} \\
 &= ( (-\Delta)^s u_f, e_g )_{\mR^n} + (q r_{\Omega} u_f, r_{\Omega} e_g)_{\Omega}
\end{align*}
since $((-\Delta)^{s/2} u, (-\Delta)^{s/2} v)_{\mR^n} = ((-\Delta)^s u, v)_{\mR^n}$ holds first for Schwartz functions by the Parseval identity, and then also for $u, v \in H^s(\mR^n)$ by density.

It remains to show that whenever $\alpha \in (-1/2, 1/2)$, $u \in H^{-\alpha}(\mR^n)$, $v \in H^{\alpha}(\mR^n)$, then 
\begin{equation} \label{halpha_omega_splitting}
(u,v)_{\mR^n} = (r_{\Omega} u, r_{\Omega} v)_{\Omega} + (r_{\Omega_e} u, r_{\Omega_e} v)_{\Omega_e}
\end{equation}
in the sense of distributional pairings. If \eqref{halpha_omega_splitting} is true, then the assumption $\beta \in (s-1/2,1/2)$ implies $(-\Delta)^s u_f \in H^{-s+\beta}(\mR^n)$ with $-s+\beta \in (-1/2,1/2)$, and since $u_f$ is a solution in $\Omega$ one has 
\[
(\Lambda_q f, g) = ( (-\Delta)^s u_f, e_g )_{\mR^n} + (q r_{\Omega} u_f, r_{\Omega} e_g)_{\Omega} = (r_{\Omega_e} (-\Delta)^s u_f, g)_{\Omega_e}
\]
which concludes the proof.

To show \eqref{halpha_omega_splitting}, let $\chi_{\Omega}$ be the characteristic function of $\Omega$. This is a pointwise multiplier on $H^{\gamma}(\mR^n)$ for $\gamma \in (-1/2,1/2)$ \cite{Triebel_Lipschitz}, and the same is true for $1-\chi_{\Omega}$. We may write $u = \chi_{\Omega} u + (1-\chi_{\Omega}) u$ and similarly for $v$, and then 
\[
(u,v)_{\mR^n} = (\chi_{\Omega} u, \chi_{\Omega} v)_{\mR^n} + ((1-\chi_{\Omega}) u, (1-\chi_{\Omega}) v)_{\mR^n} 
\]
where the cross terms vanish first for Schwartz $u$, $v$ and then in general by density. Now $\chi_{\Omega} u$ is in $H^{-\alpha}_{\ol{\Omega}}$, hence can be approximated by functions in $C^{\infty}_c(\Omega)$. Using similar approximations for the other functions and restricting to $\Omega$ and $\Omega_e$ implies \eqref{halpha_omega_splitting}.
\end{proof}

\subsection*{DN map and $\mathcal{N}_s$}
Several nonlocal Neumann boundary operators appear in the literature, see \cite{DipierroRosOtonValdinoci, Grubb_spectral} and references therein. We will relate $\Lambda_q$ to the nonlocal Neumann boundary operator $\mathcal{N}_s$ introduced in \cite{DipierroRosOtonValdinoci}, defined pointwise by 
\begin{equation} \label{nonlocal_neumann_pointwise}
\mathcal{N}_s u(x) = c_{n,s} \int_{\Omega} \frac{u(x)-u(y)}{\abs{x-y}^{n+2s}} \,dy, \qquad x \in \Omega_e.
\end{equation}
The next lemma contains a definition that applies to Sobolev functions. The result states that knowing $\Lambda_q f|_W$ for $f \in C^{\infty}_c(W)$ is equivalent to knowing $\mathcal{N}_s u_f|_W$ for $f \in C^{\infty}_c(W)$, since $\Lambda_q f|_W$ and $\mathcal{N}_s u_f|_W$ only differ by quantities that do not depend on the unknown potential $q$.

\begin{Lemma}
Assume the conditions in Lemma \ref{lemma_dnmap_pointwise}. One has 
\[
\Lambda_q f = \mathcal{N}_s u_f - mf + (-\Delta)^s(E_0 f)|_{\Omega_e}, \qquad f \in H^{s+\beta}(\Omega_e)
\]
where, for $\gamma > -1/2$, $\mathcal{N}_s$ is the map 
\[
\mathcal{N}_s: H^{\gamma}(\mR^n) \to H^{\gamma}_{\mathrm{loc}}(\Omega_e), \ \ \mathcal{N}_s u = m u|_{\Omega_e} + (-\Delta)^s(\chi_{\Omega} u)|_{\Omega_e}
\]
where $m \in C^{\infty}(\Omega_e)$ is given by $m(x) = c_{n,s} \int_{\Omega} \frac{1}{\abs{x-y}^{n+2s}} \,dy$ and $\chi_{\Omega}$ is the characteristic function of $\Omega$. Also, $E_0$ is extension by zero. If $u \in L^2(\mR^n)$, then $\mathcal{N}_s u \in L^2_{\mathrm{loc}}(\Omega_e)$ is given a.e.\ by the formula \eqref{nonlocal_neumann_pointwise}.
\end{Lemma}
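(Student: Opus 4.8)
The plan is to establish three things in turn: the well-definedness and mapping property of $\mathcal{N}_s$; the reduction to the pointwise formula \eqref{nonlocal_neumann_pointwise} when $u \in L^2(\mR^n)$; and finally the identity relating $\Lambda_q$ and $\mathcal{N}_s$. The single fact underlying all of them is the following: if $w \in H^t(\mR^n)$ for some $t \in \mR$ with $\mathrm{supp}(w) \subset \ol{\Omega}$, then the distribution $(-\Delta)^s w$ restricted to $\Omega_e$ is in fact a smooth function, given by
\[
(-\Delta)^s w|_{\Omega_e}(x) = -c_{n,s} \br{w, \abs{x-\,\cdot\,}^{-n-2s}}, \qquad x \in \Omega_e .
\]
Indeed, for $x \in \Omega_e$ one has $\mathrm{dist}(x,\ol{\Omega}) > 0$, so $y \mapsto \abs{x-y}^{-n-2s}$ is smooth on a neighbourhood of $\ol{\Omega}$ and the pairing with the compactly supported distribution $w$ makes sense and depends smoothly on $x$. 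For $w \in C^{\infty}_c(\Omega)$ the formula follows immediately from the principal value definition of $(-\Delta)^s$, since then $w(x) = 0$ and the integrand has no singularity for $x \in \Omega_e$; it then extends to all $w \in H^t_{\ol{\Omega}} = \widetilde{H}^t(\Omega)$ by density of $C^{\infty}_c(\Omega)$, using boundedness of $w \mapsto (-\Delta)^s w|_{\Omega_e}$ from $H^t_{\ol{\Omega}}$ into $H^{t-2s}(\Omega_e)$ (Lemma \ref{lemma_fractional_mapping_properties}) together with the corresponding continuity of the right-hand side into $C^k$ on compact subsets of $\Omega_e$.

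Given this, the mapping property of $\mathcal{N}_s$ is quick. The function $m(x) = c_{n,s}\int_{\Omega} \abs{x-y}^{-n-2s}\,dy$ is finite for every $x \in \Omega_e$ (again $\mathrm{dist}(x,\ol{\Omega}) > 0$) and smooth on $\Omega_e$ by differentiation under the integral sign; hence multiplication by $m$ preserves $H^{\gamma}_{\mathrm{loc}}(\Omega_e)$ for every $\gamma$ (multiplying first by a cutoff reduces this to multiplication by a function in $C^{\infty}_c(\Omega_e)$), so $m\,u|_{\Omega_e} \in H^{\gamma}_{\mathrm{loc}}(\Omega_e)$ for $u \in H^{\gamma}(\mR^n)$. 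For $\gamma > -1/2$ the product $\chi_{\Omega} u$ makes sense in some $H^t_{\ol{\Omega}}$: if $-1/2 < \gamma < 1/2$ then $\chi_{\Omega}$ is a pointwise multiplier on $H^{\gamma}(\mR^n)$, while if $\gamma \geq 1/2$ one uses $H^{\gamma}(\mR^n) \subset H^{\gamma'}(\mR^n)$ with $-1/2 < \gamma' < 1/2$. By the displayed fact, $(-\Delta)^s(\chi_{\Omega} u)|_{\Omega_e} \in C^{\infty}(\Omega_e) \subset H^{\gamma}_{\mathrm{loc}}(\Omega_e)$. Adding the two contributions shows $\mathcal{N}_s$ maps $H^{\gamma}(\mR^n)$ (continuously) into $H^{\gamma}_{\mathrm{loc}}(\Omega_e)$.

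When $u \in L^2(\mR^n)$, for a.e.\ $x \in \Omega_e$ the integrals $\int_{\Omega} u(x)\abs{x-y}^{-n-2s}\,dy$ and $\int_{\Omega} u(y)\abs{x-y}^{-n-2s}\,dy$ both converge absolutely, since $\abs{x-y}^{-n-2s}$ is bounded on $\Omega$ for each fixed $x \in \Omega_e$ and $u \in L^2(\Omega) \subset L^1(\Omega)$; hence the integrand in \eqref{nonlocal_neumann_pointwise} may be split, giving
\[
c_{n,s}\int_{\Omega} \frac{u(x)-u(y)}{\abs{x-y}^{n+2s}}\,dy = m(x)u(x) - c_{n,s}\br{\chi_{\Omega}u, \abs{x-\,\cdot\,}^{-n-2s}} = m(x)u(x) + (-\Delta)^s(\chi_{\Omega}u)|_{\Omega_e}(x),
\]
where the last step is the displayed fact applied to $w = \chi_{\Omega}u$, which lies in $L^2(\mR^n)$ with support in $\ol{\Omega}$. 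This is exactly $\mathcal{N}_s u(x)$; and since $(-\Delta)^s(\chi_{\Omega}u)|_{\Omega_e}$ is even smooth while $m\,u|_{\Omega_e} \in L^2_{\mathrm{loc}}(\Omega_e)$, we conclude $\mathcal{N}_s u \in L^2_{\mathrm{loc}}(\Omega_e)$.

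Finally, the identity. For $f \in H^{s+\beta}(\Omega_e)$ with $\beta$ as in Lemma \ref{lemma_dnmap_pointwise}, that lemma gives $\Lambda_q f = (-\Delta)^s u_f|_{\Omega_e}$, where $u_f \in H^{s+\beta}(\mR^n)$ solves $((-\Delta)^s+q)u=0$ in $\Omega$ with $u|_{\Omega_e}=f$. Write $u_f = \chi_{\Omega} u_f + \chi_{\Omega_e} u_f$ as $L^2$ functions; since $u_f|_{\Omega_e} = f$ and $\partial \Omega$ is a null set, $\chi_{\Omega_e}u_f$ is precisely the extension of $f$ by zero, i.e.\ $E_0 f$. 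Applying $(-\Delta)^s$ (bounded from $L^2$ into $H^{-2s}$) and restricting to $\Omega_e$,
\[
\Lambda_q f = (-\Delta)^s(\chi_{\Omega} u_f)|_{\Omega_e} + (-\Delta)^s(E_0 f)|_{\Omega_e},
\]
and by the definition of $\mathcal{N}_s$ the first term equals $\mathcal{N}_s u_f - m\,(u_f|_{\Omega_e}) = \mathcal{N}_s u_f - mf$, which is the asserted formula. The main point requiring care is the bookkeeping of Sobolev exponents: because $s+\beta$ may exceed $1/2$, neither $\chi_{\Omega}u_f$ nor $E_0 f$ need lie in $H^{s+\beta}(\mR^n)$, and one must check that only their membership in $L^2$ (with $\chi_{\Omega}u_f$ supported in $\ol{\Omega}$) is used — this, together with a careful proof of the ``smooth off $\ol{\Omega}$'' fact stated at the outset, is where the work lies; everything else is routine.
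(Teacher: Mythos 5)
Your proof is correct and takes essentially the same route as the paper: the identity follows from the decomposition $u_f = \chi_\Omega u_f + E_0 f$ together with $\Lambda_q f = (-\Delta)^s u_f|_{\Omega_e}$ from Lemma \ref{lemma_dnmap_pointwise}, and the well-definedness of $\mathcal{N}_s$ and the pointwise formula both rest on the off-diagonal smoothing of $(-\Delta)^s$. The only cosmetic difference is that you encapsulate this smoothing as an explicit kernel identity $(-\Delta)^s w|_{\Omega_e}(x) = -c_{n,s}\langle w, \abs{x-\cdot}^{-n-2s}\rangle$ for $w$ supported in $\ol{\Omega}$, proved by density, whereas the paper invokes the pseudolocal property of $(-\Delta)^s$ abstractly and obtains the pointwise formula via an a.e.-convergent approximating subsequence.
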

\begin{proof}
If $u \in H^{\gamma}(\mR^n)$ with $\gamma > -1/2$, then $mu|_{\Omega_e} \in H^{\gamma}_{\mathrm{loc}}(\Omega_e)$. By the pointwise multiplier property of $\chi_{\Omega}$, we have $\chi_{\Omega} u \in H^{\alpha}(\mR^n)$ for some $\alpha \in (-1/2,1/2)$ and $(-\Delta)^s (\chi_{\Omega} u) \in H^{\alpha-2s}(\mR^n)$. However, if $\varphi, \psi \in C^{\infty}_c(\mR^n)$ satisfy $\varphi = 1$ near $\ol{\Omega}$ and $\psi = 1$ near $\mathrm{supp}(\varphi)$, then for any $r, t \in \mR$ one has 
\[
(1-\psi)(-\Delta)^s \varphi: H^{-r}(\mR^n) \to H^t(\mR^n)
\]
by the pseudolocal property of Fourier multipliers. Thus one also has $(-\Delta)^s (\chi_{\Omega} u)|_{\Omega_e} \in H^t_{\mathrm{loc}}(\Omega_e)$ for any $t$, and $\mathcal{N}_s$ is well-defined and maps $H^{\gamma}(\mR^n)$ to $H^{\gamma}_{\mathrm{loc}}(\Omega_e)$ for $\gamma > -1/2$.

Moreover, if $u \in L^2(\mR^n)$ and if $\varphi_j \in C^{\infty}_c(\Omega)$ satisfy $\varphi_j \to \chi_{\Omega} u$ in $L^2(\mR^n)$, then the pseudolocal property implies that 
\[
(-\Delta)^s(\varphi_j)|_{\Omega_e} \to (-\Delta)^s(\chi_{\Omega} u)|_{\Omega_e} \quad \text{in $L^2_{\mathrm{loc}}(\Omega_e)$.}
\]
After extracting a subsequence (using the diagonal argument), one has convergence a.e.\ in $\Omega_e$. Thus the pointwise expression \eqref{nonlocal_neumann_pointwise} for a.e.\ $x \in \Omega_e$ follows from the standard formula 
\[
(-\Delta)^s \varphi(x) = c_{n,s} \int_{\mR^n} \frac{\varphi(x)-\varphi(y)}{\abs{x-y}^{n+2s}} \,dy, \quad \varphi \in C^{\infty}_c(\Omega), \ \ x \in \Omega_e.
\]

Let us prove the formula for $\Lambda_q$. If $f \in H^{s+\beta}(\Omega_e)$, then $f \in H^{\alpha}(\Omega_e)$ for some $\alpha \in (-1/2,1/2)$ and hence $E_0 f, u_f \in H^{\alpha}(\mR^n)$. Recall also that $\chi_{\Omega}$ and $1-\chi_{\Omega}$ are pointwise multipliers on $H^{\alpha}(\mR^n)$. Then 
\begin{align*}
\Lambda_q f &= (-\Delta)^s u_f|_{\Omega_e} = (-\Delta)^s (\chi_{\Omega} u_f)|_{\Omega_e} + (-\Delta)^s ((1-\chi_{\Omega})u_f)|_{\Omega_e} \\
 &= \mathcal{N}_s u_f - m f + (-\Delta)^s (E_0 f)|_{\Omega_e}. \qedhere
\end{align*}
\end{proof}

\subsection*{Nonlocal diffusion}

Finally, we will give a heuristic interpretation of the quantity $\Lambda_q f(x)$ in terms of nonlocal diffusions \cite{AMRT}. This discussion is mostly for illustrative purposes, so we will not give precise arguments and will restrict to the case $q = 0$.

We begin with a macroscopic description of nonlocal diffusion in $\mR^n$. Suppose that $u(x,t)$ describes the density of particles at a point $x \in \mR^n$ at time $t$. Given an initial density $u_0(x)$, we assume that $u(x,t)$ is obtained as a solution of the nonlocal diffusion equation 
\begin{equation} \label{diffusion_rn}
\left\{ \begin{array}{rl}
\partial_t u + (-\Delta)^s u &\!\!\!= 0 \text{ in $\mR^n \times \{ t > 0 \}$,} \\
u|_{t=0} &\!\!\!= u_0.
\end{array} \right.
\end{equation}
Taking Fourier transforms in $x$, the solution at time $t$ is given by 
\[
u(t,x) = (p_t \ast u_0)(x)
\]
where $p_t(x) = \mF^{-1} \{ e^{-t \abs{\xi}^{2s}} \}$ is the probability density function of the L\'evy process $X_t$ with infinitesimal generator $-(-\Delta)^s$. If $s=1$, $p_t$ is a Gaussian, but for $0 < s < 1$ it is a heavy-tailed distribution with $p_t(x) \sim \abs{x}^{-n-2s}$ for large $\abs{x}$ (for $s=1/2$, $p_t(x) = c_n t(t^2+\abs{x}^2)^{-\frac{n+1}{2}}$). The L\'evy process $X_t$ also gives a microscopic description of $u(x,t)$: it is obtained as the expected value 
\[
u(x,t) = \mathbb{E}_x \left[Êu_0(X_t) \right]
\]
which expresses how many L\'evy particles from the initial distribution $u_0$ have jumped to $x$ at time $t$. See \cite{Applebaum, Chen2010} for L\'evy processes.

Let now $\Omega \subset \mR^n$ be a bounded open set. We consider the following Dirichlet problem for nonlocal diffusion: given $u_0 \in H^s_{\ol{\Omega}}$, find $u$ so that 
\begin{equation} \label{diffusion_omega_homogeneous}
\left\{ \begin{array}{rl}
\partial_t u + (-\Delta)^s u &\!\!\!= 0 \text{ in $\Omega \times \{ t > 0 \}$}, \\
u|_{\Omega_e \times \{Êt > 0 \}} &\!\!\!= 0, \\
u|_{\mR^n \times \{t=0\}} &\!\!\!= u_0.
\end{array} \right.
\end{equation}
The solution is easily obtained in the form 
\begin{equation} \label{diffusion_omega_homogeneous_solution}
u(x,t) = \sum_{j=1}^{\infty} e^{-\lambda_j t} c_j \phi_j(x)
\end{equation}
where $u_0 = \sum_{j=1}^{\infty} c_j \phi_j$ and $\{Ê\phi_j \}_{j=1}^{\infty} \subset H^s_{\ol{\Omega}}$ is an orthonormal basis of $L^2_{\ol{\Omega}}$ consisting of eigenfunctions for $(-\Delta)^s$ with eigenvalues $\lambda_j$, so that $(-\Delta)^s \phi_j = \lambda_j \phi_j$ in $\Omega$, $\phi_j|_{\Omega_e} = 0$, and $0 < \lambda_1 \leq \lambda_2 \leq \cdots \to \infty$. The probabilistic interpretation is that we are looking at L\'evy particles in $\Omega$ that are terminated when they reach the exterior. One has 
\[
u(x,t) = \mathbb{E}_x \left[Êu_0(X_t) 1_{\{t < \tau\}} \right]
\]
where $\tau$ is the time when the L\'evy process exits $\Omega$.

By the Duhamel principle and a standard reduction to homogeneous Dirichlet values, given any $f \in H^s(\Omega_e)$ and any $e_f \in H^s(\mR^n)$ with $e_f|_{\Omega_e} = f$, we can also solve the equation 
\begin{equation} \label{diffusion_omega_fixedf}
\left\{ \begin{array}{rll}
\partial_t v + (-\Delta)^s v &\!\!\!= 0 &\text{ in $\Omega \times \{ t > 0 \}$,} \\
v(\,\cdot\,,t)|_{\Omega_e} &\!\!\!= f &\text{ for $t > 0$,}
\end{array} \right.
\end{equation}
with initial value $v|_{\mR^n \times \{t=0\}} = e_f$. Another solution of \eqref{diffusion_omega_fixedf} is given by $v_s(x,t) = u_f(x)$, if $u_f \in H^s(\mR^n)$ solves $(-\Delta)^s u = 0$ in $\Omega$ with $u|_{\Omega_e} = f$. The function $u_f$ is the unique steady state of \eqref{diffusion_omega_fixedf}, since $v-v_s$ solves \eqref{diffusion_omega_homogeneous} for some $u_0$, and \eqref{diffusion_omega_homogeneous_solution} implies that 
\[
\norm{v(\,\cdot\,,t) - u_f}_{H^s} \to 0 \text{ as $t \to \infty$}.
\]

Now, given $f \in H^s(\Omega_e)$ and the solution $u_f$ of the Dirichlet problem, we may consider two nonlocal diffusions with initial value $u_f$:
\begin{itemize}
\item 
the free diffusion \eqref{diffusion_rn} in $\mR^n$ with solution $u(x,t)$,
\item 
the diffusion \eqref{diffusion_omega_fixedf} whose exterior value is fixed to be $f$.
\end{itemize}
If $t$ is small and $x \in \Omega_e$, $u(x,t)$ formally satisfies 
\begin{align*}
u(x,t) &= u(x,0) + \partial_t u(x,0) t + O(t^2) = f(x) - (-\Delta)^s u(x,0) t + O(t^2) \\
 &= f(x) - (\Lambda_0 f)(x) t + O(t^2)
\end{align*}
by Lemma \ref{lemma_dnmap_pointwise}. Thus the DN map may be interpreted as follows:
\begin{itemize}
\item 
$-\Lambda_0 f(x)$ is the (infinitesimal) amount of particles migrating to $x$ in the free diffusion that starts from the steady state $u_f$;
\item 
$\Lambda_0 f(x)$ is the (infinitesimal) cost required to maintain the exterior value $f$ at $x$ in the steady state nonlocal diffusion.
\end{itemize}
Similar remarks apply to $\Lambda_q$ at least if $q \geq 0$. We refer to \cite{Chen} for some facts on the related stochastic processes, and to \cite{PiiroinenSimon} for stochastic interpretations of the usual Calder\'on problem.

\section{Uniqueness properties} \label{sec_uniqueness}

We prove the uniqueness result for the fractional Laplacian, Theorem \ref{thm_uniqueness}, which is an easy consequence of the Carleman estimates in \cite{Ruland} and the Caffarelli-Silvestre extension \cite{CaffarelliSilvestre}.

\begin{proof}[Proof of Theorem \ref{thm_uniqueness}]
Assume first that $u$ is a continuous bounded function in $\mR^n$. Write $\mR^{n+1}_+ = \{Ê(x,y) \,;\, x \in \mR^n, \ y > 0 \}$, and denote by $w$ the extension of $u$ to $\mR^{n+1}_+$ defined by 
\[
w(x,y) = (P_y \ast u)(x), \qquad P_y(x) = c_{n,s}\frac{y^{2s}}{(\abs{x}^2 + y^2)^{\frac{n+2s}{2}}}.
\]
By \cite[Remark 3.8]{CabreSire1} $w$ is the unique continuous bounded solution in $\overline{\mR^{n+1}}$ of the Dirichlet problem 
\[
\mathrm{div}(y^{1-2s} \nabla w) = 0 \text{ in $\mR^{n+1}$}, \qquad w|_{y=0} = u.
\]
If we additionally assume that $u \in H^s(\mR^n)$, then by \cite[Section 3]{CabreSire1} the solution $w$ satisfies $\int_{\mR^{n+1}_+} y^{1-2s} \abs{\nabla w}^2 \,dx \,dy < \infty$, and one has 
\[
(-\Delta)^s u = - d_s \lim_{y \to 0^+} y^{1-2s} \partial_y w(\,\cdot\,,y)
\]
where the limit exists in $H^{-s}(\mR^n)$. See \cite{CabreSire1} for the precise values of the constants $c_{n,s}$ and $d_s$.

Assume now that $u$ is a continuous bounded function in $\mR^n$ with $u \in H^s(\mR^n)$, and $u|_W = (-\Delta)^s u|_W = 0$ where $W$ is a ball in $\mR^n$. Denote by $B$ the ball in $\mR^{n+1}$ with $B \cap \{Êy=0 \} = W$, and define $B^+ = \{Ê(x,y) \in B \,;\, y > 0 \}$. Since $u|_W = (-\Delta)^s u|_W = 0$, $w$ satisfies 
\[
\mathrm{div}(y^{1-2s} \nabla w) = 0 \text{ in $B^+$}, \ \ w|_{B \cap \{ y=0 \}} = \lim_{y \to 0^+} y^{1-2s} \partial_y w|_{B \cap \{ y=0 \}} = 0.
\]
The function $w$ thus satisfies the conditions in \cite[Proposition 2.2]{Ruland}, and one obtains that $w|_{B^+} \equiv 0$. But $w$ is real-analytic in $\mR^{n+1}_+$ as the solution of an elliptic equation with real-analytic coefficients (see \cite[Theorem 8.6.1]{Hormander1}). Hence $w \equiv 0$ in $\mR^{n+1}$, which implies that $u \equiv 0$.

Finally, let $u \in H^{-r}(\mR^n)$ for some $r > 0$, and $u|_W = (-\Delta)^s u|_W = 0$ for some ball $W \subset \mR^n$. Consider the smooth approximations  
\[
u_{\eps} = u \ast \eps^{-n} \varphi(\,\cdot\,/\eps)
\]
where $\varphi \in C^{\infty}_c(\mR^n)$ satisfies $\int \varphi \,dx = 1$, $\varphi \geq 0$, and $\varphi = 0$ for $\abs{x} \geq 1$. There exist $\eps_0 > 0$ and a smaller ball $W' \subset W$ such that $u_{\eps}|_{W'} = 0$ and also $(-\Delta)^s u_{\eps}|_{W'} = ((-\Delta)^s u) \ast \eps^{-n} \varphi(\,\cdot\,/\eps)|_{W'} = 0$ whenever $\eps < \eps_0$. Now each $u_{\eps}$ is in $H^{\alpha}(\mR^n)$ for any $\alpha \in \mR$, since $\hat{u}_{\eps}(\xi) = m(\xi) \hat{u}(\xi)$ where $m(\xi) = \hat{\varphi}(\eps \xi)$ is a Schwartz function and $\br{\xi}^{-r} \hat{u}(\xi)$ is in $L^2$. By Sobolev embedding, each $u_{\eps}$ is also continuous and bounded in $\mR^n$. The argument above implies that $u_{\eps} \equiv 0$ whenever $\eps < \eps_0$, showing that $u = \lim_{\eps \to 0} u_{\eps} = 0$.
\end{proof}

\begin{Remark}
We note that for $s=1/2$ the above argument simplifies: the function $w$ in the proof is just the harmonic extension of $u$ to $\mR^{n+1}$, and it satisfies $w|_{W \times \{Êy=0 \}} = \partial_y w|_{W \times \{Êy=0 \}} = 0$. The odd extension $\tilde{w}$ of $w$ to $W \times \mR$ is smooth, satisfies $\Delta_{x,y} \tilde{w} = 0$, and $\tilde{w}|_{W \times \{Êy=0 \}} = \partial_y \tilde{w}|_{W \times \{Êy=0 \}} = 0$. Using the equation one observes that $\tilde{w}$ vanishes to infinite order on $W \times \{Êy=0 \}$, thus by analyticity $\tilde{w} \equiv 0$ and $u \equiv 0$.
\end{Remark}

\begin{Remark}
For comparison, we recall the original argument in \cite[Chapitre III.11]{Riesz} for proving a result like Theorem \ref{thm_uniqueness}. There are two steps: first one uses the Kelvin transform to reduce to the case where $u$ and $(-\Delta)^s u$ vanish outside some ball, and then one computes derivatives of $u$ and lets $x \to \infty$ to show that all moments of $(-\Delta)^s u$ must vanish. See \cite[Lemma 3.5.4]{Isakov_source} for another proof of the second step.

Let $u$ be in the Sobolev space $W^{-r,q}(\mR^n)$ for some $r \in \mR$, where $q = \frac{2n}{n+2s}$. By approximation, translation and scaling, we may assume that $u \in W^{t,q}(\mR^n)$ for any $t > 0$ and $u|_B = (-\Delta)^s u|_B = 0$ where $B$ is the unit ball. Write $f = (-\Delta)^s u$, so $f, u \in L^q \cap L^{\infty}$ and $u = I_{2s} f$. Define 
\[
v = R_{2s} u, \qquad g = R_{-2s} f
\]
where $R_{\alpha} f(x) = \abs{x}^{\alpha-n} f(K(x))$ and $K(x) = x/\abs{x}^2$ is the Kelvin transform. Since $\det DK(x) = -\abs{x}^{-2n}$ and $\abs{K(x)-K(y)} = \frac{\abs{x-y}}{\abs{x} \abs{y}}$, one computes $\norm{R_{-2s} f}_{L^q} = \norm{f}_{L^q}$ and $R_{2s} I_{2s} f = I_{2s} R_{-2s} f$. Then $g \in L^q$, both $v = I_{2s} g$ and $g$ vanish outside $B$, and 
\[
v(x) = c_{n,s} \int_{B} \abs{x-y}^{2s-n} g(y) \,dy = 0, \qquad \abs{x} > 1.
\]
In particular, letting $x \to \infty$, one gets $\int_B g(y) \,dy = 0$. Applying powers of the Laplacian to $v(x)$ we get 
\[
\int_B \abs{x-y}^{2s-n-2k} g(y) \,dy = 0, \qquad k \geq 0, \ \ \abs{x} > 1.
\]
Computing $\partial_{x_j} v(x)$ and letting $x \to \infty$ gives $\int_B y_j g(y) \,dy = 0$. Repeating this for higher order derivatives implies that $\int_B y^{\alpha} g(y) \,dy = 0$ for any multi-index $\alpha$, hence $g \equiv 0$. This finally gives $f \equiv 0$ and $u \equiv 0$.

The above argument seems to require that $f \in L^q$ for $q$ close to $1$ in order for $R_{-2s} f$ to be an $L^p$ function for some $p$. If one starts with a solution $u \in H^{-r}$ for some $r$, after approximation one gets $f \in L^2 \cap L^{\infty}$ and there is an issue since $R_{-2s} f$ might have a non-integrable singularity at $0$.
\end{Remark}

\section{Approximation in $L^2(\Omega)$} \label{sec_approximation_ltwo}

We will use the following Runge approximation property for solutions of the fractional Schr\"odinger equation. If $q \in L^{\infty}(\Omega)$ satisfies \eqref{dirichlet_uniqueness}, we denote by $P_q$ the Poisson operator 
\begin{equation} \label{poisson_operator_definition}
P_q: X \to H^s(\mR^n), \ \ f \mapsto u
\end{equation}
where $X = H^s(\mR^n)/\widetilde{H}^s(\Omega)$ is the abstract space of exterior values, and $u \in H^s(\mR^n)$ is the unique solution of $((-\Delta)^s + q)u = 0$ in $\Omega$ with $u - f \in \widetilde{H}^s(\Omega)$ given in Lemma \ref{lemma_fractional_dirichlet_solvability}.

\begin{Lemma} \label{lemma_runge_fractional}
Let $\Omega \subset \mR^n$ be bounded open set, let $0 < s < 1$, and let $q \in L^{\infty}(\Omega)$ satisfy \eqref{dirichlet_uniqueness}. Let also $W$ be any open subset of $\Omega_e$. Consider the set 
\begin{align*}
\mathcal{R} = \{ u|_{\Omega} \,;\, u = P_q f, \ f \in C^{\infty}_c(W) \}.
\end{align*}
Then $\mathcal{R}$ is dense in $L^2(\Omega)$.
\end{Lemma}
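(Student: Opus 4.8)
The plan is to use the standard Hahn--Banach duality argument for Runge-type approximation, with the crucial input being the uniqueness property Theorem \ref{thm_uniqueness}. Since $\mathcal{R}$ is a subspace of $L^2(\Omega)$, it suffices to show that any $v \in L^2(\Omega)$ which is orthogonal to every element of $\mathcal{R}$ must vanish. So suppose $v \in L^2(\Omega)$ satisfies $(v, r_\Omega u)_\Omega = 0$ for all $u = P_q f$ with $f \in C^\infty_c(W)$. I would like to interpret this condition by introducing the solution of a suitable \emph{adjoint} (in fact dual) problem: let $\phi \in H^s(\mathbb{R}^n)$ solve $((-\Delta)^s + q)\phi = v$ in $\Omega$ with $\phi \in \widetilde{H}^s(\Omega)$ (i.e.\ $\phi|_{\Omega_e} = 0$); this exists and is unique by Lemma \ref{lemma_fractional_dirichlet_solvability}, using \eqref{dirichlet_uniqueness}. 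Because $q$ is real and the bilinear form $B_q$ is symmetric, this is genuinely a self-adjoint situation.

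Next I would compute, for any $f \in C^\infty_c(W)$ with $u = P_q f$, the pairing $(v, r_\Omega u)_\Omega$ using the weak formulations of the two equations. Testing the equation for $\phi$ against $u$ and the equation for $u$ against $\phi$ (both are legitimate since $\phi \in \widetilde{H}^s(\Omega)$ and $u, \phi \in H^s(\mathbb{R}^n)$), one gets
\[
(v, r_\Omega u)_\Omega = B_q(\phi, u) = B_q(u,\phi) = ((-\Delta)^{s/2} u, (-\Delta)^{s/2}\phi)_{\mathbb{R}^n} + (q r_\Omega u, r_\Omega \phi)_\Omega,
\]
and since $\phi|_{\Omega_e}=0$ this equals $((-\Delta)^s u, \phi)_{\mathbb{R}^n}$, which is just $((-\Delta)^s \phi, u)_{\mathbb{R}^n}$ pushed the other way. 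The point is that the right-hand side can also be evaluated by pairing $(-\Delta)^s \phi$ against $u$ over $\Omega_e$, where $u = f$: one should arrive at the identity $(v, r_\Omega u)_\Omega = \langle (-\Delta)^s \phi, f\rangle_{\Omega_e}$ for all $f \in C^\infty_c(W)$. Hence the orthogonality hypothesis forces $(-\Delta)^s \phi|_W = 0$. But we also know $\phi|_W = 0$ since $W \subset \Omega_e$ and $\phi \in \widetilde{H}^s(\Omega)$. Now $\phi \in H^s(\mathbb{R}^n) \subset H^{-r}(\mathbb{R}^n)$ for any $r$, and both $\phi$ and $(-\Delta)^s \phi$ vanish in the open set $W$, so Theorem \ref{thm_uniqueness} gives $\phi \equiv 0$, and therefore $v = ((-\Delta)^s + q)\phi|_\Omega = 0$.

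The main obstacle, and the step requiring the most care, is the integration-by-parts identity relating $(v, r_\Omega u)_\Omega$ to the exterior pairing $\langle (-\Delta)^s\phi, f\rangle$ over $W$: one must check that $(-\Delta)^s\phi$ makes sense as a distribution paired against $f \in C^\infty_c(W)$ and that the manipulation of the bilinear forms is valid at the regularity available, i.e.\ only $\phi \in H^s(\mathbb{R}^n)$. This is handled by working consistently with the weak formulation: $\langle (-\Delta)^s \phi, f \rangle = ((-\Delta)^{s/2}\phi, (-\Delta)^{s/2} f)_{\mathbb{R}^n}$ for $f \in H^s$, and the relation $B_q(\phi, u) - (v, r_\Omega u)_\Omega = \langle (-\Delta)^s \phi + qr_\Omega\phi - v, \cdot \rangle$ localizes to $\Omega$ and to $\Omega_e$ exactly as in Lemma \ref{lemma_dnmap_definition}; the only subtlety is bookkeeping of where each term is supported, which the structure $u = f + (u-f)$ with $u - f \in \widetilde{H}^s(\Omega)$ resolves cleanly. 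A secondary, essentially cosmetic point is that $C^\infty_c(W)$ rather than all of $H^s$ is used as the test class, but since $C^\infty_c(W)$ is dense in $\widetilde{H}^s(W)$ and the identity is continuous in $f$, nothing is lost.
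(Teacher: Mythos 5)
Your overall strategy is exactly the paper's: a Hahn--Banach duality argument, introduction of the solution $\phi$ of the inhomogeneous dual problem with zero exterior data, derivation that $\phi$ and $(-\Delta)^s\phi$ both vanish on $W$, and an appeal to Theorem \ref{thm_uniqueness}. However, the central computation as you have written it is incorrect. The first equality in your display,
\[
(v, r_\Omega u)_\Omega = B_q(\phi, u),
\]
is not justified: the weak formulation defining $\phi$ reads $B_q(\phi,w) = (v,r_\Omega w)_\Omega$ \emph{only for} $w \in \widetilde{H}^s(\Omega)$, and $u = P_q f$ is not in $\widetilde{H}^s(\Omega)$ since its exterior value is $f \neq 0$. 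Worse, $B_q(\phi,u) = B_q(u,\phi) = 0$ because $u$ solves the homogeneous equation in $\Omega$ and $\phi \in \widetilde{H}^s(\Omega)$ is an admissible test function; so the displayed chain of equalities, read literally, collapses to $0 = 0$ and carries no information. The subsequent manipulation (``since $\phi|_{\Omega_e}=0$ this equals $((-\Delta)^s u,\phi)_{\mathbb{R}^n}$'') also silently drops the $q$-term, which does not vanish merely because $\phi$ is supported in $\overline{\Omega}$.

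The fix is the one you gesture at in your closing paragraph but do not carry out: substitute $u - f$, which \emph{does} lie in $\widetilde{H}^s(\Omega)$, into the weak formulation for $\phi$, and use $r_\Omega f = 0$ (since $f$ is supported in $W \subset \Omega_e$). This yields
\[
(v, r_\Omega u)_\Omega = (v, r_\Omega(u-f))_\Omega = B_q(\phi, u-f) = B_q(\phi,u) - B_q(\phi,f) = -B_q(\phi,f) = -\bigl((-\Delta)^s\phi, f\bigr)_{\mathbb{R}^n},
\]
where the middle terms are handled exactly as in your closing remarks and the last equality again uses $r_\Omega f = 0$. Note also that your claimed identity $(v, r_\Omega u)_\Omega = \langle(-\Delta)^s\phi, f\rangle_{\Omega_e}$ has the wrong sign; this does not affect the conclusion, but it is a further symptom of the derivation being off. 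With the corrected identity, the orthogonality hypothesis gives $((-\Delta)^s\phi, f) = 0$ for all $f \in C^\infty_c(W)$, and the rest of your argument (which is correct) finishes the proof.
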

\begin{proof}
By the Hahn-Banach theorem, it is enough to show that any $v \in L^2(\Omega)$ with $(v,w)_{\Omega} = 0$ for all $w \in \mathcal{R}$ must satisfy $v \equiv 0$. If $v$ is such a function, then 
\begin{equation} \label{runge_first_fact}
(v, r_{\Omega} P_q f)_{\Omega} = 0, \qquad f \in C^{\infty}_c(W).
\end{equation}

We claim that the formal adjoint of $r_{\Omega} P_q$ is given by 
\begin{equation} \label{runge_second_fact}
(v, r_{\Omega} P_q f)_{\Omega} = -B_q(\varphi, f), \qquad f \in C^{\infty}_c(W),
\end{equation}
where $\varphi \in H^s(\mR^n)$ is the solution given by Lemma \ref{lemma_fractional_dirichlet_solvability} of 
\[
\text{$((-\Delta)^s+q)\varphi = v$ in $\Omega$, \qquad $\varphi \in \widetilde{H}^s(\Omega)$.}
\]
In other words, $B_q(\varphi, w) = (v,r_{\Omega} w)_{\Omega}$ for any $w \in \widetilde{H}^s(\Omega)$. To prove \eqref{runge_second_fact}, let $f \in C^{\infty}_c(W)$, and let $u_f = P_q f \in H^s(\mR^n)$ so $u_f-f \in \widetilde{H}^s(\Omega)$. Then 
\[
(v, r_{\Omega} P_q f)_{\Omega} = (v, r_{\Omega} (u_f - f))_{\Omega} = B_q(\varphi, u_f - f) = -B_q(\varphi,f).
\]
In the last line, we used that $u_f$ is a solution and $\varphi \in \widetilde{H}^s(\Omega)$.

Combining \eqref{runge_first_fact} and \eqref{runge_second_fact}, we have that 
\[
B_q(\varphi, f) = 0, \qquad f \in C^{\infty}_c(W).
\]
Since $r_{\Omega} f = 0$, this implies that 
\[
0 = ((-\Delta)^{s/2} \varphi, (-\Delta)^{s/2} f)_{\mR^n} = ( (-\Delta)^s \varphi, f )_{\mR^n}, \qquad f \in C^{\infty}_c(W).
\]
In particular, $\varphi \in H^s(\mR^n)$ satisfies 
\[
\varphi|_W = (-\Delta)^s \varphi|_W = 0.
\]
Theorem \ref{thm_uniqueness} implies that $\varphi \equiv 0$, and thus also $v \equiv 0$.
\end{proof}

\section{Inverse problem} \label{sec_inverse_problem}

It is now easy to prove the uniqueness result for the inverse problem.

\begin{proof}[Proof of Theorem \ref{thm_main}]
Note that if $F \in X^*$, then $F|_{W_2}$ is a distribution in $W_2$ with $F|_{W_2}(\varphi) = F([\varphi])$, $\varphi \in C^{\infty}_c(W_2)$. Now if $\Lambda_{q_1} f|_{W_2} = \Lambda_{q_2} f|_{W_2}$ for any $f \in C^{\infty}_c(W_1)$, the integral identity in Lemma \ref{lemma_integral_identity} yields that 
\[
\int_{\Omega} (q_1 - q_2) u_1 u_2 \,dx = 0
\]
whenever $u_j \in H^s(\mR^n)$ solve $( (-\Delta)^s + q_j ) u_j = 0$ in $\Omega$ with exterior values in $C^{\infty}_c(W_j)$. Let $f \in L^2(\Omega)$, and use the approximation result, Lemma \ref{lemma_runge_fractional}, to find sequences $(u_j^{(k)})$ of functions in $H^s(\mR^n)$ that satisfy 
\begin{gather*}
( (-\Delta)^s + q_1 ) u_1^{(k)} = ( (-\Delta)^s + q_2) u_2^{(k)} = 0 \text{ in $\Omega$}, \\
\text{$u_j^{(k)}$ have exterior values in $C^{\infty}_c(W_j)$,} \\
r_{\Omega} u_1^{(k)} = f + r_1^{(k)}, \ \ r_{\Omega} u_2^{(k)} = 1 + r_2^{(k)}
\end{gather*}
where $r_1^{(k)}, r_2^{(k)}Ê\to 0$ in $L^2(\Omega)$ as $k \to \infty$. Inserting these solutions in the integral identity and taking the limit as $k \to \infty$ implies that 
\[
\int_{\Omega} (q_1 - q_2) f \,dx = 0.
\]
Since $f \in L^2(\Omega)$ was arbitrary, we conclude that $q_1 = q_2$.
\end{proof}

\section{Higher order approximation} \label{sec_approximation_sobolev}

We proceed to prove Theorem \ref{thm_approximation}(b). The argument is similar to that in Section \ref{sec_approximation_ltwo}, but since the approximation is in high regularity spaces, by duality we will need to solve Dirichlet problems with data in negative order Sobolev spaces. This follows again by duality from regularity results for the Dirichlet problem proved in \cite{Hormander_unpublished, Grubb}.

We will next introduce function spaces from \cite{Grubb}. To keep closer to the notation of \cite{Grubb}, in this section we write the fractional Laplacian as $(-\Delta)^a$ where $0 < a < 1$. Assume that $\Omega \subset \mR^n$ is a bounded domain with $C^{\infty}$ boundary, and let $q \in C^{\infty}_c(\Omega)$ satisfy the analogue of \eqref{dirichlet_uniqueness}, 
\begin{equation} \label{dirichlet_uniqueness_a}
\left\{Ê\begin{array}{c} \text{if $u \in H^a(\mR^n)$ solves $((-\Delta)^a + q)u = 0$ in $\Omega$ and $u|_{\Omega_e} = 0$,} \\
\text{then $u \equiv 0$.} \end{array} \right.
\end{equation}
We assume $q$ compactly supported to fit the operator theory in \cite{Grubb}. Define 
\[
\mathcal{E}_a(\ol{\Omega}) = e^+ d(x)^a C^{\infty}(\ol{\Omega})
\]
where $e^+$ denotes extension by zero from $\Omega$ to $\mR^n$, and $d$ is a $C^{\infty}$ function in $\ol{\Omega}$, positive in $\Omega$â and satisfying $d(x) = \mathrm{dist}(x,\partial \Omega)$ near $\partial \Omega$. If $s > a-1/2$ we will also consider the Banach space $H^{a(s)}(\ol{\Omega})$ which arises as the exact solution space of functions $u$ satisfying 
\[
r_{\Omega}((-\Delta)^a + q) u \in H^{s-2a}(\Omega), \qquad u|_{\Omega_e} = 0.
\]
We will not give the actual definition, but instead we will use the following properties from \cite{Grubb}.

\begin{Lemma} \label{lemma_has_spaces}
For any $s > a-1/2$, there is a Banach space $H^{a(s)}(\ol{\Omega})$ with the following properties:
\begin{enumerate}
\item[(a)] 
$H^{a(s)}(\ol{\Omega}) \subset H^{a-1/2}_{\ol{\Omega}}$ with continuous inclusion,
\item[(b)]  
$H^{a(s)}(\ol{\Omega}) = H^s_{\ol{\Omega}}$ if $s \in (a-1/2, a+1/2)$,
\item[(c)]  
the operator $r_{\Omega} ((-\Delta)^a + q)$ is a homeomorphism from $H^{a(s)}(\ol{\Omega})$ onto $H^{s-2a}(\Omega)$,
\item[(d)]  
$H^s_{\ol{\Omega}} \subset H^{a(s)}(\ol{\Omega}) \subset H^s_{\mathrm{loc}}(\Omega)$ with continuous inclusions, i.e.\ multiplication by any $\chi \in C^{\infty}_c(\Omega)$ is bounded $H^{a(s)}(\ol{\Omega}) \to H^s(\Omega)$,
\item[(e)]
$\mathcal{E}_a(\ol{\Omega}) = \cap_{s > a-1/2} H^{a(s)}(\ol{\Omega})$, and $\mathcal{E}_a(\ol{\Omega})$ is dense in $H^{a(s)}(\ol{\Omega})$.
\end{enumerate}
\end{Lemma}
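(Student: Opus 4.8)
The plan is to take $H^{a(s)}(\ol{\Omega})$ to be the $a$-transmission Sobolev space (with underlying $L^2$ regularity) of H\"ormander and Grubb: roughly, the image of the ordinary supported space $H^{s-a}_{\ol{\Omega}}$ under an order-reducing operator of order $a$ adapted to $\partial\Omega$ and preserving support in $\ol{\Omega}$; see \cite{Hormander_unpublished, Grubb, Grubb2} for the precise construction. Granting this definition, properties (a), (b), (d) and (e) are structural facts about these spaces established in \cite{Grubb}: (a) and the inclusions in (d) are built into the construction via the mapping properties of the order-reducing operators; (b) is the statement that the transmission space coincides with the ordinary supported Sobolev space precisely when $\abs{s-a}<1/2$; and (e) identifies $\bigcap_{s>a-1/2}H^{a(s)}(\ol{\Omega})$ with $\mathcal{E}_a(\ol{\Omega})$ and asserts density of the latter. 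Thus the only assertion that needs a separate argument is the homeomorphism property (c) for the perturbed operator $r_{\Omega}((-\Delta)^a+q)$.

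For (c), I would first quote the case $q=0$: $(-\Delta)^a$ is a classical elliptic pseudodifferential operator of order $2a$ possessing the $a$-transmission property at $\partial\Omega$, so by the elliptic regularity and solvability theory for the homogeneous Dirichlet problem in \cite{Grubb}, the map $r_{\Omega}(-\Delta)^a:H^{a(s)}(\ol{\Omega})\to H^{s-2a}(\Omega)$ is a homeomorphism for every $s>a-1/2$. To bring in the potential, observe that since $q\in C^\infty_c(\Omega)$, property (d) shows that multiplication by $q$ maps $H^{a(s)}(\ol{\Omega})$ boundedly into $H^s_K$ for a fixed compact set $K\subset\Omega$ (here $H^s_K$ denotes the $H^s(\mR^n)$-functions with support in $K$); composing with $r_{\Omega}$ and with the compact embedding $H^s_K\hookrightarrow H^{s-2a}(\Omega)$ (Rellich, using $2a>0$), the map $r_{\Omega}q:H^{a(s)}(\ol{\Omega})\to H^{s-2a}(\Omega)$ is compact. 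Hence $r_{\Omega}((-\Delta)^a+q)$ is Fredholm of index $0$ on each $H^{a(s)}(\ol{\Omega})$, $s>a-1/2$.

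It then suffices to show that this operator has trivial kernel on every $H^{a(s)}(\ol{\Omega})$ (the cokernel then vanishes by index $0$). Suppose $u\in H^{a(s)}(\ol{\Omega})$ with $r_{\Omega}((-\Delta)^a+q)u=0$. Then $r_{\Omega}(-\Delta)^a u=-r_{\Omega}(qu)$, and by (d) the right-hand side lies in $H^s(\Omega)=H^{(s+2a)-2a}(\Omega)$; applying the $q=0$ homeomorphism at level $s+2a$, together with the nesting $H^{a(s+2a)}(\ol{\Omega})\subset H^{a(s)}(\ol{\Omega})$ and the injectivity of $r_{\Omega}(-\Delta)^a$ at level $s$, one gets $u\in H^{a(s+2a)}(\ol{\Omega})$. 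Iterating, after finitely many steps the regularity index exceeds $a$, so $u\in H^{a(a)}(\ol{\Omega})=\widetilde{H}^a(\Omega)$ by (b) and nesting; thus $u\in H^a(\mR^n)$ solves $((-\Delta)^a+q)u=0$ in $\Omega$ with $u|_{\Omega_e}=0$, hence $u\equiv0$ by \eqref{dirichlet_uniqueness_a}. This establishes (c) for all $s>a-1/2$.

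The main obstacle is not this Fredholm-perturbation step, which is routine, but the fact that the whole scheme rests on the $\mu$-transmission calculus of H\"ormander and Grubb, which is what supplies the spaces $H^{a(s)}(\ol{\Omega})$, their properties (a)--(e), and the $q=0$ homeomorphism; carrying this out from scratch would amount to redeveloping a substantial part of \cite{Grubb}. A minor but essential point of care is the bookkeeping: $q$ must be supported in a compact subset of $\Omega$ so that it is a genuine lower-order perturbation not interfering with the transmission condition at $\partial\Omega$, and one must carefully align Grubb's notation for these spaces with the conventions $H^s$, $H^s_{\ol{\Omega}}$, $H^s(\Omega)$, $\widetilde{H}^s(\Omega)$ used here.
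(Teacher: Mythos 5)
Your proposal is correct, and for parts (a), (b), (d), (e) it matches the paper: these are all structural properties quoted directly from \cite{Grubb} (respectively Section 1, Section 1, the definitions plus Theorem 2, and Proposition 4.1). For (c) you take a somewhat different route. The paper cites three things: the Fredholm property of $r_\Omega((-\Delta)^a+q)$ on $H^{a(s)}(\ol\Omega)$ from \cite[Theorem 2]{Grubb} (already for the perturbed operator), the $s$-independence of the finite-dimensional kernel and range complement from \cite[Theorem 3.5]{Grubb2}, and finally triviality of both at $s=a$ from \eqref{dirichlet_uniqueness_a} together with Lemma \ref{lemma_fractional_dirichlet_solvability}. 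You instead start from the $q=0$ homeomorphism, obtain the Fredholm-index-zero property of the perturbed operator via compactness of $r_\Omega q$ (using (d) plus Rellich, which is fine since $q$ is compactly supported and $2a>0$), and then derive kernel triviality at arbitrary $s$ by bootstrapping the regularity up to $s\geq a$, at which point $H^{a(a)}(\ol\Omega)=H^a_{\ol\Omega}=\widetilde H^a(\Omega)$ and \eqref{dirichlet_uniqueness_a} applies. In effect your bootstrap re-proves the special case of \cite[Theorem 3.5]{Grubb2} needed here. Both routes work; the paper's is shorter because it cites the invariance theorem wholesale, while yours is slightly more self-contained but implicitly uses two auxiliary facts not stated in the lemma — the nesting $H^{a(t)}(\ol\Omega)\subset H^{a(s)}(\ol\Omega)$ for $t>s$, and the $q=0$ isomorphism $r_\Omega(-\Delta)^a:H^{a(s)}(\ol\Omega)\to H^{s-2a}(\Omega)$ at every $s>a-1/2$ — both of which are indeed in \cite{Grubb} but should be cited explicitly. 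If the latter were only available as a Fredholm statement at general $s$, your argument would quietly need the very invariance result you are trying to avoid, so it is worth being precise about what \cite{Grubb} actually delivers for the unperturbed operator.
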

\begin{proof}
(a) and (b) follow from \cite[Section 1]{Grubb}. (c) follows since $r_{\Omega} ((-\Delta)^a + q): H^{a(s)}(\ol{\Omega}) \to H^{s-2a}(\Omega)$ is a Fredholm operator \cite[Theorem 2]{Grubb}, it has a finite-dimensional kernel and range complement  independent of $s$ \cite[Theorem 3.5]{Grubb2}, and for $s=a$ the kernel and range complement are trivial using \eqref{dirichlet_uniqueness_a} and Lemma \ref{lemma_fractional_dirichlet_solvability}. (d) follows from (c) and (a), or alternatively from the definitions in \cite[Section 1]{Grubb}. (e) is in \cite[Proposition 4.1]{Grubb}.
\end{proof}

We next prove an approximation result in the space $\mathcal{E}_a(\ol{\Omega})$, equipped with the topology induced by the norms $\{ \norm{\,\cdot\,}_{H^{a(m)}(\ol{\Omega})} \}_{m=1}^{\infty}$. Then $\mathcal{E}_a(\ol{\Omega})$ is a Fr\'echet space.

\begin{Lemma} \label{lemma_runge_fractional_higherorder}
Let $\Omega \subset \mR^n$, $n \geq 2$, be a bounded domain with $C^{\infty}$ boundary, let $0 < a < 1$, let $W$ be an open subset of $\Omega_e$, and let $q \in C^{\infty}_c(\Omega)$ satisfy \eqref{dirichlet_uniqueness_a}. If $P_q$ is the Poisson operator in \eqref{poisson_operator_definition}, define 
\[
\mathcal{R} = \{ e^+ r_{\Omega} P_q f \,;\, f \in C^{\infty}_c(W) \}.
\]
Then $\mathcal{R}$ is a dense subset of $\mathcal{E}_a(\ol{\Omega})$.
\end{Lemma}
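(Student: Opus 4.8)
The plan is to mimic the proof of Lemma \ref{lemma_runge_fractional}, but now working in the Fr\'echet space $\mathcal{E}_a(\ol{\Omega})$ and using the higher-order regularity theory encoded in Lemma \ref{lemma_has_spaces}. By the Hahn--Banach theorem, it suffices to show that any continuous linear functional on $\mathcal{E}_a(\ol{\Omega})$ that vanishes on $\mathcal{R}$ must vanish identically. Since $\mathcal{E}_a(\ol{\Omega}) = \cap_{m} H^{a(m)}(\ol{\Omega})$ with the projective limit topology, any such functional is continuous on some fixed $H^{a(m)}(\ol{\Omega})$, hence is represented (via property (c) of Lemma \ref{lemma_has_spaces}, which identifies $r_{\Omega}((-\Delta)^a+q)$ as a homeomorphism $H^{a(m)}(\ol{\Omega}) \to H^{m-2a}(\Omega)$) by pairing against some $v$ in the dual $H^{m-2a}(\Omega)^* = H^{2a-m}_{\ol{\Omega}}(\mR^n)$, a negative-order Sobolev distribution supported in $\ol{\Omega}$. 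Concretely, the functional has the form $u \mapsto (v, r_{\Omega}((-\Delta)^a+q)\,\cdot\,)$-type pairing; I will want to phrase it so the annihilation condition reads $\langle v, r_{\Omega} P_q f \rangle = 0$ for all $f \in C^{\infty}_c(W)$, interpreting this pairing through the $H^{a(m)}$--$H^{2a-m}_{\ol{\Omega}}$ duality.

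Next I would compute the adjoint of $r_{\Omega} P_q$ exactly as in Lemma \ref{lemma_runge_fractional}. Let $\varphi \in H^a(\mR^n)$ solve $((-\Delta)^a + q)\varphi = v$ in $\Omega$ with $\varphi \in \widetilde{H}^a(\Omega)$; here solvability for the negative-order right-hand side $v \in H^{2a-m}_{\ol{\Omega}}$ is precisely where the Grubb regularity theory is needed, and one must check $\varphi$ is well-defined and that the bilinear-form manipulations make sense (the pairing $B_q(\varphi, w) = \langle v, r_\Omega w\rangle$ for appropriate $w$). Then, writing $u_f = P_q f$ with $u_f - f \in \widetilde{H}^a(\Omega)$ and $r_\Omega f = 0$ since $\mathrm{supp}(f) \subset W \subset \Omega_e$, the same two-line computation gives $\langle v, r_\Omega P_q f\rangle = -B_q(\varphi, f)$. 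Combined with the annihilation hypothesis, $B_q(\varphi, f) = 0$ for all $f \in C^{\infty}_c(W)$, and since $r_\Omega f = 0$ this reduces to $((-\Delta)^a \varphi, f)_{\mR^n} = 0$ for all such $f$, i.e. $\varphi|_W = (-\Delta)^a \varphi|_W = 0$. Theorem \ref{thm_uniqueness} then forces $\varphi \equiv 0$, hence $v = ((-\Delta)^a + q)\varphi|_\Omega = 0$, so the functional is zero and $\mathcal{R}$ is dense.

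The main obstacle I anticipate is the careful bookkeeping of duality pairings: one must correctly identify the dual of $\mathcal{E}_a(\ol{\Omega})$ (respectively of $H^{a(m)}(\ol{\Omega})$), match it against $H^{2a-m}_{\ol{\Omega}}(\mR^n)$ via the homeomorphism in Lemma \ref{lemma_has_spaces}(c), and verify that the formal adjoint computation $\langle v, r_\Omega P_q f\rangle = -B_q(\varphi,f)$ remains valid when $v$ is only a negative-order distribution supported in $\ol{\Omega}$ and $\varphi \in \widetilde H^a(\Omega)$ solves the Dirichlet problem with that rough datum (invoking Lemma \ref{lemma_fractional_dirichlet_solvability} for solvability, noting $v \in \widetilde H^a(\Omega)^* = H^{-a}(\Omega)$ whenever $m \geq 2a$, which one may arrange by enlarging $m$). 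Everything downstream — applying the unique continuation Theorem \ref{thm_uniqueness} and concluding — is then identical to Section \ref{sec_approximation_ltwo}. A secondary point worth a sentence is that $e^+ r_\Omega P_q f$ indeed lies in $\mathcal{E}_a(\ol{\Omega})$: this is because $r_\Omega P_q f$ solves $((-\Delta)^a + q)u = 0$ in $\Omega$ with exterior data in $C^\infty_c(\Omega_e)$, so by Lemma \ref{lemma_has_spaces}(c) applied with arbitrarily large $s$ (the right-hand side $-r_\Omega((-\Delta)^a(E_0 f))$ being smooth up to $\partial\Omega$ by the pseudolocal property, as $\mathrm{supp}(f)$ is disjoint from $\ol\Omega$), it belongs to every $H^{a(s)}(\ol{\Omega})$, hence to $\mathcal{E}_a(\ol{\Omega})$ by Lemma \ref{lemma_has_spaces}(e).
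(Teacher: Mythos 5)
Your overall architecture matches the paper's proof: reduce by Hahn--Banach to showing that a continuous functional $L$ on the Fr\'echet space $\mathcal{E}_a(\ol{\Omega})$ vanishing on $\mathcal{R}$ is zero, note that $L$ is bounded with respect to some norm $\norm{\,\cdot\,}_{H^{a(m)}(\ol{\Omega})}$ and extends to $\bar{L} \in (H^{a(m)}(\ol{\Omega}))^*$, then exploit the homeomorphism $T = r_{\Omega}((-\Delta)^a+q) : H^{a(m)}(\ol{\Omega}) \to H^{m-2a}(\Omega)$ from Lemma \ref{lemma_has_spaces}(c) via its adjoint to represent $\bar{L} = T^* v$ with $v \in (H^{m-2a}(\Omega))^* = H^{2a-m}_{\ol{\Omega}}(\mR^n)$. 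The verification that $\mathcal{R} \subset \mathcal{E}_a(\ol{\Omega})$ is also correct and is essentially the paper's.

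Where you go wrong is the next step. You propose to mimic Lemma \ref{lemma_runge_fractional} literally by solving $((-\Delta)^a+q)\varphi = v$ in $\Omega$, $\varphi \in \widetilde{H}^a(\Omega)$, via Lemma \ref{lemma_fractional_dirichlet_solvability}, and you justify this by asserting ``$v \in \widetilde{H}^a(\Omega)^* = H^{-a}(\Omega)$ whenever $m \geq 2a$, which one may arrange by enlarging $m$.'' This inequality runs the wrong way. The distribution $v$ lies in $H^{2a-m}_{\ol{\Omega}}(\mR^n)$, which \emph{grows} as $m$ increases; for the pairing $\br{v,w}$ to make sense for $w \in \widetilde{H}^a(\Omega)$ you would need $2a-m \geq -a$, i.e.\ $m \leq 3a$, and since the exponent $m$ is dictated by the given functional (you may always enlarge it but not shrink it), this cannot in general be arranged. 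So the Dirichlet problem with right-hand side $v$ is ill-posed in $\widetilde{H}^a(\Omega)$, and the formal adjoint identity $\br{v, r_\Omega P_q f} = -B_q(\varphi,f)$ has no interpretation: the natural duality you are using is not the $L^2$-type pairing of $v$ against $u$, but the $T$-twisted pairing $\bar{L}(u) = (v, Tu)$, so the expression $\br{v, r_\Omega P_q f}$ is not even the quantity that vanishes.

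The paper sidesteps all of this by \emph{not} introducing $\varphi$. The representative $v$ obtained from the duality already plays the role that $\varphi$ played in Lemma \ref{lemma_runge_fractional}. Concretely: since $T(P_q f) = 0$ and $q f = 0$ in $\Omega$ (as $\mathrm{supp}\, f \subset W \subset \Omega_e$ and $\mathrm{supp}\, q \subset \Omega$), one has
\[
0 = \bar{L}(P_q f - f) = (v, T(P_q f - f)) = -(v, Tf),
\]
and approximating $v$ by $v_j \in C^\infty_c(\Omega)$ in $H^{2a-m}$ and using symmetry of $(-\Delta)^a$ and $q$, this equals $-\lim_j (((-\Delta)^a+q)v_j, f) = -((-\Delta)^a v, f)$. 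Hence $v|_W = (-\Delta)^a v|_W = 0$ (the first since $\mathrm{supp}\, v \subset \ol{\Omega}$), and Theorem \ref{thm_uniqueness} applied directly to $v$ gives $v \equiv 0$, so $\bar{L} = T^* v = 0$. You should delete the $\varphi$-step entirely and apply unique continuation to $v$ itself.
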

\begin{proof}
Note that $\mathcal{R} \subset \mathcal{E}_a(\ol{\Omega})$, since for $f \in C^{\infty}_c(W)$ one has $P_q f = f + v$ where $r_{\Omega}((-\Delta)^a + q)v \in C^{\infty}(\ol{\Omega})$ and $v|_{\Omega_e} = 0$, hence $v \in \mathcal{E}_a(\ol{\Omega})$ by Lemma \ref{lemma_has_spaces}.

Let $L$ be a continuous linear functional on $\mathcal{E}_a(\ol{\Omega})$ that satisfies 
\[
L(e^+ r_{\Omega} P_q f) = 0, \qquad f \in C^{\infty}_c(W).
\]
It is enough to show that $L \equiv 0$, since then $\mathcal{R}$ will be dense by the Hahn-Banach theorem.

By the properties of Fr\'echet spaces, there exists an integer $s$ so that 
\[
\abs{L(u)} \leq C \sum_{m=1}^s \norm{u}_{H^{a(m)}(\ol{\Omega})} \leq C' \norm{u}_{H^{a(s)}(\ol{\Omega})}, \qquad u \in \mathcal{E}_a(\ol{\Omega}).
\]
Since $\mathcal{E}_a(\ol{\Omega})$ is dense in $H^{a(s)}(\ol{\Omega})$, $L$ has a unique bounded extension $\bar{L} \in (H^{a(s)}(\ol{\Omega}))^*$. Consider next the homeomorphism in Lemma \ref{lemma_has_spaces}, 
\[
T = r_{\Omega}( (-\Delta)^a + q ): H^{a(s)}(\ol{\Omega}) \to H^{s-2a}(\Omega).
\]
Its adjoint is a bounded map between the dual Banach spaces, 
\[
T^*: (H^{s-2a}(\Omega))^* \to (H^{a(s)}(\ol{\Omega}))^*.
\]
The map $T^*$ is also a homeomorphism, with inverse given by $(T^{-1})^*$. Using the identification $(H^{s-2a}(\Omega))^* = H^{-s+2a}_{\ol{\Omega}}$ one has 
\[
T^* v(w) = (v, Tw), \qquad w \in H^{a(s)}(\ol{\Omega}).
\]

Now let $v \in H^{-s+2a}_{\ol{\Omega}}$ be the unique function satisfying $T^* v = \bar{L}$, and choose a sequence $(v_j)_{j=1}^{\infty} \subset C^{\infty}_c(\Omega)$ with $v_j \to v$ in $H^{-s+2a}$. If $f \in C^{\infty}_c(W)$, recall that $e^+ r_{\Omega} P_q f = P_q f - f$, and observe that 
\begin{align*}
0 &= L(e^+ r_{\Omega} P_q f) = \bar{L}(P_q f-f) = T^* v(P_q f - f) = (v, T(P_q f - f)) \\
 &=-(v,Tf) = -\lim \,(v_j, ((-\Delta)^a + q) f) = -\lim \,( ((-\Delta)^a + q) v_j, f ).
\end{align*}
Here we used that $T P_q f = 0$ and $v_j \in C^{\infty}_c(\Omega)$. Since $f \in C^{\infty}_c(W)$, we may take the limit as $j \to \infty$ and obtain that 
\[
((-\Delta)^a v, f) = 0, \qquad f \in C^{\infty}_c(W).
\]
Thus $v \in H^{-s+2a}(\mR^n)$ satisfies 
\[
v|_W = (-\Delta)^a v|_W = 0.
\]
By Theorem \ref{thm_uniqueness} it follows that $v \equiv 0$. This implies that $\bar{L} \equiv 0$ and $L \equiv 0$ as required.
\end{proof}

\begin{proof}[Proof of Theorem \ref{thm_approximation}]
Since $\mathrm{int}(\Omega_1 \setminus \Omega) = 0$, we may find a small ball $W$ with $\ol{W} \subset\Omega_1 \setminus \ol{\Omega}$. Part (a) is then a consequence of Lemma \ref{lemma_runge_fractional}. As for part (b), if $f \in C^{\infty}(\ol{\Omega})$ and if $g = e^+ d(x)^a f \in \mathcal{E}_a(\ol{\Omega})$, Lemma \ref{lemma_runge_fractional_higherorder} ensures that there is a sequence $(u_j)_{j=1}^{\infty} \subset H^s(\mR^n)$ with 
\[
((-\Delta)^s + q) u_j = 0 \text{ in $\Omega$}, \qquad \mathrm{supp}(u_j) \subset \ol{\Omega}_1,
\]
so that $e^+ r_{\Omega} u_j \in \mathcal{E}_a(\ol{\Omega})$ and 
\[
e^+ r_{\Omega} u_j \to g \text{ in $\mathcal{E}_a(\ol{\Omega})$.}
\]
The result will follow if we can show that 
\[
M: C^{\infty}(\ol{\Omega}) \to \mathcal{E}_a(\ol{\Omega}), \ \ Mf = e^+ d(x)^a f
\]
is a homeomorphism, since then applying $M^{-1} = d(x)^{-a} r_{\Omega}$ gives 
\[
d(x)^{-a} r_{\Omega} u_j \to f \text{ in $C^{\infty}(\ol{\Omega})$}.
\]
But $M$ is a bijective linear map between Fr\'echet spaces and has closed graph: if $f_j \to f$ in $C^{\infty}$ and $Mf_j \to h$ in $\mathcal{E}_a$, then also $Mf_j \to Mf$ in $L^{\infty}$ and one obtains $Mf = h$ by uniqueness of distributional limits. Thus $M$ is a homeomorphism by the closed graph and open mapping theorems (in other words, there is a unique Fr\'echet space topology on $\mathcal{E}_a(\ol{\Omega})$ stronger than the Hausdorff topology inherited from $\mathcal{D}'(\mR^n)$).
\end{proof}

\begin{Remark}
Let us note the following consequence of Theorem \ref{thm_approximation}(b): if $k \geq 0$ and $R > 1$ are fixed, then for any $g \in C^k(\ol{B}_1)$ and for any $\eps > 0$ there is a function $u \in H^s(\mR^n)$ satisfying 
\[
(-\Delta)^s u = 0 \text{ in } B_1, \quad \mathrm{supp}(u) \subset \ol{B}_R, \quad \norm{u-g}_{C^k(\ol{B}_1)} < \eps.
\]
This can be seen by taking $\Omega = B_r$ and $\Omega_1 = B_R$ where $1 < r < R$, and by choosing $f \in C^{\infty}(\ol{B}_r)$ with $\norm{f - d(x)^{-s} g}_{C^k(\ol{B}_1)}$ small enough.
\end{Remark}

\bibliographystyle{alpha}

\end{document}